\numberwithin{equation}{section}
\theoremstyle{plain}
\newtheorem{theorem}{Theorem}[section]
\newtheorem{proposition}[theorem]{Proposition}
\newtheorem{lemma}[theorem]{Lemma}
\newtheorem{corollary}[theorem]{Corollary}
\theoremstyle{definition}
\newtheorem{definition}[theorem]{Definition}
\newtheorem{example}[theorem]{Example}
\theoremstyle{remark}
\newtheorem{remark}[theorem]{Remark}
\par\begin{tabular}{rcl}}%
\newcommand{\st}{\;|\;}
\newcommand{\defeq}{:=}
\newcommand{\half}{{\frac{1}{2}}}
\newcommand{\ip}[1]{\langle #1 \rangle}
\newcommand{\lie}{\mathfrak}
\newcommand{\CC}{\mathbb{C}}
\newcommand{\LL}{\mathbb{L}}
\newcommand{\NN}{\mathbb{N}}
\newcommand{\RR}{\mathbb{R}}
\newcommand{\ZZ}{\mathbb{Z}}
\newcommand{\sA}{\mathcal{A}}
\newcommand{\sB}{\mathcal{B}}
\newcommand{\sC}{\mathcal{C}}
\newcommand{\sD}{\mathcal{D}}
\newcommand{\sE}{\mathcal{E}}
\newcommand{\sH}{\mathcal{H}}
\newcommand{\sO}{\mathcal{O}}
\newcommand{\sU}{\mathcal{U}}
\newcommand{\bfmu}{{\boldsymbol{\mu}}}
\newcommand{\SU}{\mathrm{SU}}
\renewcommand{\Re}{\mathop{\mathrm{Re}}\nolimits}
\DeclareMathOperator{\End}{End}
\DeclareMathOperator{\Tr}{Tr}
\DeclareMathOperator{\Res}{Res}
\DeclareMathOperator{\id}{id}
\DeclareMathOperator{\ord}{ord}
\DeclareMathOperator{\Op}{Op}
\DeclareMathOperator{\dom}{dom}
\newcommand{\Uqt}{\sU_q(\lie{t})}
\newcommand{\Uqk}{\sU_q(\lie{k})}
\newcommand{\hit}{\!\triangleright\!}
\newcommand{\DO}{\mathrm{DO}}
\newcommand{\slot}{\,\cdot\,}
\newcommand{\CP}{\CC \mathrm{P}}
\newcommand{\algord}{\ord_\sD}
\newcommand{\dmu}{\partial_{\bfmu}}
\newcommand{\Dirac}{\slash\!\!\!\!D}
\newcommand{\PsiDO}{\mathrm{\Psi DO}}
\newcommand{\PsiDOo}{\mathrm{\Psi DO}_0}
\newcommand{\Uq}{\sU_q}
\begin{document}

\title{Regularity of twisted spectral triples and pseudodifferential calculi}
\author{Marco Matassa}
\address{Université Clermont Auvergne, Université Blaise Pascal, BP 10448, F-63000 Clermont-Ferrand, France}
\email{Marco.Matassa@math.univ-bpclermont.fr}

\author{Robert Yuncken}
\thanks{R.~Yuncken was supported by the project SINGSTAR of the Agence Nationale de la Recherche, ANR-14-CE25-0012-01.}
\address{Université Clermont Auvergne, Université Blaise Pascal, BP 10448, F-63000 Clermont-Ferrand, France}
\email{robert.yuncken@math.univ-bpclermont.fr}

\date{}

\subjclass[2010]{Primary: 58J42; Secondary 46L87, 58B32}
\keywords{Noncommutative geometry; spectral triple; local index formula; type III non-commutative geometry; quantum groups}

\maketitle

\begin{abstract}
We investigate the regularity condition for twisted spectral triples. This condition is equivalent to the existence of an appropriate pseudodifferential calculus compatible with the spectral triple. A natural approach to obtain such a calculus is to start with a twisted algebra of abstract differential operators, in the spirit of Higson. Under an appropriate algebraic condition on the twisting, we obtain a pseudodifferential calculus which admits an asymptotic expansion, similarly to the untwisted case.
We present some examples coming from the theory of quantum groups. Finally we discuss zeta functions and the residue (twisted) traces on differential operators.
\end{abstract}

\section{Introduction}

The basic structure in Connes' noncommutative geometry \cite{Connes:NCG} is a \emph{spectral triple}, consisting of a $*$-algebra $\sA$ represented on a Hilbert space $H$ and equipped with an unbounded self-adjoint operator $D$.  These must satisfy certain axioms---notably, in the standard formulation, that the commutators $[D,a]$ be bounded for all $A\in \sA$. 
However, the full power of the definition is unleashed only when one adds the additional property of \emph{regularity}: that $\sA$ and $[D,\sA]$ are in the domain of the derivation $\delta = [|D|,\slot]$ as well as all its iterates $\delta^n$.  
This is the context in which one obtains the celebrated Local Index Formula of Connes and Moscovici \cite{ConMos:local_index_formula}.  


On the other hand, once one leaves the commutative world one quickly finds that the bog standard definition of a spectral triple leaves out many interesting examples.  One new phenomenon that occurs is ``twisting'',  also called ``type III noncommutative geometry'' \cite{ConMos:twisted}.  Connes and Moscovici were motivated to study this by index theory for foliations, but a similar phenomenon arises in the study of quantum homogeneous spaces---see, \emph{e.g.}, \cite{NesTus:local_index_formula}.

A \emph{twisting} means an algebra automorphism $\theta$ of $\sA$.  We use the following notation for twisted commutators:
\[
 [a,b]_\theta := ab-\theta(b)a.
\]

\begin{definition}[Connes-Moscovici \cite{ConMos:twisted}]
\label{def:twisted_spectral_triple}
 A \emph{twisted (unital) spectral triple} is a triple\footnote{This would be a quadruple if one included the twisting $\theta$ in the data, making for some awkward terminological choices.  Connes and Moscovici use the name \emph{$\theta$-spectral triple.}} $(\sA,H,D)$ consisting of a unital $*$-algebra $\sA$ with twisting $\theta$ such that $\sA$ is represented as bounded operators on a Hilbert space $H$, together with an unbounded self-adjoint operator $D$ with compact resolvent such that $[D,a]_\theta$ is densely defined and bounded for all $a\in \sA$.
\end{definition}

Defining regularity for twisted spectral triples becomes a little awkward.  One should clearly replace the derivation $\delta= [|D|,\slot]$ with the twisted derivation 
\[
 \delta_\theta = [|D|,\slot]_\theta.
\]
But the repeated commutators $\delta_\theta^n(a)$ are not well-defined until one extends the twisting $\theta$ to each $\delta_\theta^{n-1}(\sA)$ in turn.  In \cite{ConMos:twisted}, Connes and Moscovici only  define Lipschitz regularity explicitly---considering only one application of $\delta_\theta$---but they could easily have made the following generalization.

\begin{definition}
\label{def:regular}
 A twisted spectral triple $(\sA, H, D)$ with twisting $\theta$ is \emph{regular} if there is a larger algebra  $\sB\subseteq\LL(H)$ containing both $\sA$ and $[D,\sA]_\theta$ and which is equipped with an extension of $\theta$ as a \emph{linear} isomorphism such that $\sB$ is invariant under $\delta_\theta$.
\end{definition}

\begin{remark}
Note that, unlike \cite{ConMos:twisted}, we are only requiring $\theta$ to be a linear isomorphism, not an algebra automorphism, on $\sB$.  This point is crucial for applications to quantum groups.  On the other hand, $\theta$ will generally be an automorphism on "principal symbols"---see Remark \ref{rmk:symbol_aut}.
\end{remark}

Definition \ref{def:regular} is obviously the correct generalization, but how to verify it in practice?  Higson \cite{Higson:local_index_formula}, distilling the ideas of Connes and Moscovici \cite{ConMos:local_index_formula}, pointed out that the point of entry in practice is the existence of an algebra of ``abstract differential operators" $\sD$ and a Laplace-type operator $\Delta$.  Combining the complex powers of the Laplacian and the abstract differential operators results in "abstract pseudodifferential operators" with the essential properties that one needs to run the Connes-Moscovici machine.

The abstract properties of the Laplace-type operator $\Delta$ are (a) that it satisfies elliptic estimates and (b) that its principal symbol be central.  The main point of this article is to show that, under some mild assumptions, we can replace (b) by twisted centrality and produce a twisted pseudodifferential calculus.  We also obtain some of the immediate consequences of the pseudodifferential calculus: regularity of twisted spectral triples, and residue traces of zeta-functions.

This is the appropriate framework for studying Kr\"ahmer's Dirac operators on quantum projective spaces \cite{Krahmer:Dirac, DanDab}, see also \cite{Matassa:CPq}, although that requires some additional analysis which will be deferred to a separate article.

\subsection{Summary of results}

We conclude the introduction with an overview of the paper, focusing on the comparison with the untwisted theory.   To get started, we must extend the twisting $\theta$ from $\sA$ to an algebra of generalized differential operators $\sD$.  As above, \emph{we only require that the twisting be a linear isomorphism, not an algebra automorphism}.  

In Section \ref{sec:equivalence}, following \cite{Uuye:PsiDOs}, we state equivalences between the existence  of various auxiliary structures: differential operators, pseudodifferential operators and pseudodifferential operators of order at most zero.  It should be emphasized, however, that in practice the key point is the passage from differential to pseudodifferential operators (\emph{i.e.}, algebra to analysis), which is detailed in Section \ref{sec:DO_to_PsiDO}.  

It is here that we impose a simplifying assumption on the twisting---namely ``diagonalizability''  (Definition \ref{def:diagonalizable}).  This assumption is very natural when one is motivated, as we are, by quantum groups.  It does not cover the conformally deformed spectral triples studied in \cite{ConMos:twisted}, although those examples are sufficiently close to the classical situation that the classical pseudodifferential calculus can be used.  See also \cite{PonWan:I}.

In Section \ref{sec:DO_to_PsiDO} we obtain an explicit asymptotic expansion for a product of pseudodifferential operators.  The formula needs
a quantum generalization of the binomial coefficients, which we develop in Appendix \ref{sec:mu-Cauchy}.  As in the untwisted case, this yields a residue trace under the hypothesis of simple dimension spectrum---see Section \ref{sec:zeta-functions}.  This may be a twisted trace, although \emph{a priori} the twisting here has no relation to the twisting of the spectral triple.

To indicate how this will apply in practice, we sketch in Section \ref{sec:Hopf} a framework from quantum groups which, in the presence of elliptic estimates, can lead to regular twisted spectral triples via our twisted pseudodifferential calculus.  The case of the Podle\'s sphere is discussed in some detail, \emph{cf.} \cite{NesTus:local_index_formula}.  Further examples will be discussed in a future paper.


\section{Sobolev theory}

Throughout this paper, we will fix a strictly positive 
unbounded operator $\Delta$ on a Hilbert space $H$, which we will think of as an \emph{abstract Laplace operator}. We will also fix an integer $r\geq2$, which is nominally the "order" of $\Delta$.  Typically, $r=2$.

Such an operator gives rise to an abstract Sobolev theory.  This is well summarized in the open sections of \cite{Uuye:PsiDOs}, to which we refer for details.  Let us quickly review the main points.

\subsection{Sobolev spaces}

Let $H^\infty = H^\infty(\Delta)$ denote the common domain of all powers of $\Delta$:
\[
  H^\infty \defeq \bigcap_{n=0}^\infty \dom(\Delta^n).
\]
The $s$th-Sobolev space $H^s = H^s(\Delta)$ is the completion of $H^\infty$ with respect to the inner product
\[
  \ip{\eta,\xi}_s \defeq \ip{ \Delta^\frac{s}{r}\eta, \Delta^\frac{s}{r}\xi } .
\]

\subsection{Operators of finite analytic order}
A linear operator on $T:H^\infty \to H^\infty$ is said to have \emph{analytic order (at most) $t\in\RR$} if, for every $s\in\RR$, it extends to bounded operator $T:H^s \to H^{s-t}$.  We write $\Op^t$ for the set of operators of analytic order at most $t$, and 
\[
  \Op \defeq \bigcup_{t\in\RR} \Op^t, \qquad \Op^{-\infty} \defeq \bigcap_{t\in\RR} \Op^t.
\] 
Then $\Op$ is an $\RR$-filtered algebra.  Also $\Op^0$ is an algebra of bounded operators on $\sH$, in which $\Op^{-t}$ is a two-sided ideal for all $t\in(0,\infty]$.  

For any $z\in\CC$, $\Delta^{\frac{z}{r}} \in \Op^{\Re(z)}$ is an isometric isomorphism from $H^s$ to $H^{s-\Re(z)}$ for every $s\in\RR$.   In particular it belongs to $\Op^{\Re(z)}$. 
It follows that $\Delta^{\frac{z}{r}}\Op^t =  \Op^{t+\Re(z)}$ and $\Op^t\Delta^{\frac{z}{r}} =  \Op^{t+\Re(z)}$ for all $z\in\CC$, $t\in\RR$.

\subsection{The $\Op$-topology}
\label{sec:topology}

Interpolation methods show that an operator $T:H^\infty \to H^\infty$ belongs to $\Op^t$ if and only if it extends continuously to a map $H^n \to H^{n-t}$ for all integers $n\in\ZZ$.  The family of operator norms $\left(\|\slot \|_{H^n \to H^{n+t}}\right)_{n\in\ZZ}$ therefore makes $\Op^t$ into a Fr\'echet space, and $\Op^0$ into a Fr\'echet algebra.  Note that these seminorms can also be written as 
\begin{equation}
 \label{eq:Frechet_norms}
 \|T\|_{H^n \to H^{n-t}} = \| \Delta^{\frac{n-t}{r}} T \Delta^{-\frac{n}{r}} \|_{\LL(H)}.
\end{equation}

Although this is the correct topological structure to place on the algebras associated to regular twisted spectral triples, we will rarely have need for it; see Remark \ref{rmk:ignore_topology}.

\section{Generalized differential and pseudodifferential operators}
\label{sec:definitions}

\subsection{Differential operators}
\label{sec:DOs}

Let $\sD$ be an $\NN$-filtered algebra, represented as linear operators on $H^\infty$.   The \emph{algebraic order} of $X\in\sD$ is $\algord(X) = \inf\{k\in\NN \st X \in \sD^k\}$.


As above, we fix an abstract Laplace operator $\Delta$ of degree $r$.  We will denote by $\nabla$ the twisted commutator
\[
  \nabla(X) \defeq [\Delta , X]_{\theta^r} = \Delta X - \theta^{r}(X) \Delta, \qquad  X\in\End(H^\infty).
\]

\begin{definition}
 \label{def:DOs}
 A \emph{twisted algebra of abstract differential operators} (abbreviated to \emph{twisted algebra of DOs}) associated to $\Delta$ is an $\NN$-filtered algebra $\sD$ of operators on $H^\infty$ equipped with a linear filtration-preserving automorphism $\theta$, such that:
 \begin{enumerate}
  \item The twisted commutator $\nabla = [\Delta, \slot]_{\theta^r}$ maps $\sD^m$ to $\sD^{m+r-1}$
  \item \emph{Elliptic estimate}: For any $X\in\sD^m$, there is $C>0$ such that for any $v\in H^\infty$, $\|Xv\|_H \leq C \|\Delta^{\frac{m}{r}} v\|_H$.
 \end{enumerate}
\end{definition}

\begin{remark}
\label{rmk:symbol_aut}
Note that we do not require the twisting $\theta$ to be an algebra automorphism on $\sD$.  Nevertheless, for any $X\in\sD^m$, $Y\in\sD^n$ we have
\[
 (\theta^r(XY)-\theta^r(X)\theta^r(Y))\Delta = \nabla(X) Y + X \nabla(Y) - \nabla(XY) 
   \in \sD^{m+n+r-1}.
\]
This shows that, at least under some mild assumptions on $\Delta$,
(\emph{e.g.}, that it is a multiplier of $\sD$ which is injective on the associated graded algebra), 
the twisting $\theta^r$ does define an algebra automorphism on the associated graded algebra of $\sD$---which one might reasonably call the algebra of \emph{principal symbols}.

\end{remark}

The basic estimate is equivalent to a compatibility between the algebraic and analytic order of differential operators.  This is made precise by the following Lemma, which has essentially the same proof
as its untwisted analogue in \cite{Higson:local_index_formula} or \cite{Uuye:PsiDOs}.  

\begin{lemma}
 \label{lem:analytic_order}
 Let $\sD$ be an $\NN$-filtered algebra of linear operators on $H^\infty$ such that $[\Delta,\sD^m]_{\theta^r} \in \sD^{m+r-1}$ for all $m \in \NN$.
 Then the elliptic estimate is satisfied for every $X \in \sD^m$ if and only if $\sD^m \subset \Op^m$ for all $m\in\NN$.
 
\end{lemma}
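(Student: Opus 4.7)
The direction $\sD^m \subseteq \Op^m \Rightarrow \text{elliptic estimate}$ is immediate, and I would dispose of it first: if $X \in \sD^m \subseteq \Op^m$ extends to a bounded operator $H^m \to H$, then by the very definition $\|v\|_{H^m} = \|\Delta^{m/r}v\|_H$ one obtains $\|Xv\|_H \leq C\|\Delta^{m/r}v\|_H$, which is exactly the elliptic estimate.

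For the converse, my plan is to fix $X \in \sD^m$, assume the elliptic estimate for every element of $\sD$, and prove $X:H^s \to H^{s-m}$ is bounded for every $s \in \RR$. By the integer-$n$ characterization recalled in \S\ref{sec:topology}, together with standard complex interpolation in the Sobolev scale $\{H^s\}_{s\in\RR}$, it will suffice to establish boundedness only for $s = m + rk$, $k \in \ZZ$; and the case $k=0$ is the elliptic estimate itself. For $k \geq 1$, I would prove by induction on $k$ an expansion of the form
\[
\Delta^k X \Delta^{-k} = \theta^{rk}(X) + \sum_{j=1}^{k} Y_{k,j}\,\Delta^{-j}, \qquad Y_{k,j} \in \sD^{m+j(r-1)},
\]
obtained by iterating the rearranged twisted commutator identity $\Delta X\Delta^{-1} = \theta^r(X) + \nabla(X)\Delta^{-1}$, with each $Y_{k,j}$ a certain explicit composite of $\theta^r$'s and $\nabla$'s applied to $X$. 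Applying the elliptic estimate to each $Y_{k,j} \in \sD^{m+j(r-1)}$ then gives
\[
\|Y_{k,j}\Delta^{-j}v\|_H \leq C\|\Delta^{(m+j(r-1))/r}\Delta^{-j}v\|_H = C\|\Delta^{(m-j)/r}v\|_H \leq C'\|v\|_{H^m},
\]
where the last inequality uses strict positivity of $\Delta$ to bound $\Delta^{-j/r}$; the leading term $\theta^{rk}(X) \in \sD^m$ is treated identically. Summing and substituting $v = \Delta^k w$ converts the result into $\|Xw\|_{H^{rk}} \leq C\|w\|_{H^{m+rk}}$, the desired boundedness at $s = m+rk$. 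A symmetric iteration based on the companion identity $\Delta^{-1}X\Delta = \theta^{-r}(X) - \Delta^{-1}\nabla(\theta^{-r}(X))$ handles $k \leq -1$ and closes the argument.

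The hardest part will be the algebraic bookkeeping of the correction terms $Y_{k,j}$ in the inductive expansion. Because $\theta$ is only assumed to be a linear filtration-preserving automorphism of $\sD$, and not necessarily multiplicative, each $Y_{k,j}$ must be kept as a literal composite of $\theta^r$'s and $\nabla$'s and cannot be collapsed to a closed form as in the untwisted case. However the only two properties that actually enter the estimate---that $\theta^r$ preserves the filtration and that $\nabla$ raises it by $r-1$---are built into the hypotheses, so a straightforward induction records each $Y_{k,j}$ as lying in $\sD^{m+j(r-1)}$. Once the bookkeeping is set up, the analytic content of the proof is a routine combination of the elliptic estimate with the boundedness of negative powers of $\Delta$, in direct parallel with the untwisted arguments of Higson and Uuye.
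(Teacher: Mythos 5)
The forward direction ($\sD^m \subseteq \Op^m \Rightarrow$ elliptic estimate) and the upward iteration for $k \geq 1$ are both correct, and your reduction to $s = m+rk$ via interpolation between the steps of an arithmetic progression is fine. The gap is in your claim that ``a symmetric iteration \ldots handles $k \leq -1$.'' It does not, and the problem is structural, not bookkeeping.

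For $k = -L$ with $L \geq 1$ you need, after the substitution $v = \Delta^{-L}w$, a bound of the form $\|\Delta^{-L} X \Delta^{L} v\|_H \leq C\|v\|_{H^m}$. Iterating the companion identity gives an expansion
\[
 \Delta^{-L} X \Delta^{L} \;=\; \theta^{-rL}(X) \;+\; \sum_{j=1}^{L} (-1)^j\, \Delta^{-j}\, Z_{L,j}, \qquad Z_{L,j} \in \sD^{\,m + j(r-1)},
\]
but --- in contrast with the positive direction --- the powers $\Delta^{-j}$ now sit on the \emph{left}. Estimating a single term via the elliptic estimate on $Z_{L,j}$ yields
\[
 \|\Delta^{-j} Z_{L,j}\, v\|_H \;\leq\; C\,\|Z_{L,j}\,v\|_H \;\leq\; C'\,\bigl\|\Delta^{(m + j(r-1))/r}v\bigr\|_H \;=\; C'\,\|v\|_{H^{\,m + j(r-1)}},
\]
and for $j \geq 1$ and $r \geq 2$ this is a \emph{strictly stronger} Sobolev norm than $\|v\|_{H^m}$. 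The smoothing factor $\Delta^{-j}$ is wasted because it sits on the wrong side of $Z_{L,j}$ to cancel the gain of $j(r-1)$ in the differential order; in the positive direction the cancellation $\Delta^{(m+j(r-1))/r}\Delta^{-j} = \Delta^{(m-j)/r}$ works precisely because the $\Delta^{-j}$ is on the right. Already the $j=1$ term of the $L=1$ case fails: one obtains only $\|Xw\|_{H^{-r}} \leq C\|w\|_{H^{m-r}} + C'\|w\|_{H^{m-1}}$, and $m-1 > m-r$.

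So the negative half of the range $s \in \RR$ is not handled, and the ``routine combination of the elliptic estimate with boundedness of negative powers of $\Delta$'' is not enough for it. This is exactly where the untwisted arguments you cite bring in an additional ingredient (a duality/adjoint step, or a more careful treatment of the negative Sobolev indices), which your sketch omits. The paper itself does not spell out the argument --- it refers the reader to Higson and Uuye --- so there is no internal proof to compare against, but as written your proposal contains a genuine gap for $s < m$.
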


\begin{remark}
 Definition \ref{def:DOs} does not force an \emph{equality} of algebraic and analytic order---\emph{i.e.}, we may not have $\sD^m = \sD \cap \Op^m$ for all $m$.   For an obviously artificial example, take the classical Laplace operator $\Delta$ on $L^2(M)$ for a smooth Riemannian manifold $M$, and let $\sD=\DO(M)$ be the algebra of differential operators but with the shifted filtration:
\[
 \sD^m = \begin{cases}
        \CC, & m=0\\
        \DO^{m-1}(M), & m\geq 1.
       \end{cases}
\]
This satisfies the axioms of a twisted algebra of $\DO$s with trivial twisting.
\end{remark}


\subsection{Pseudodifferential operators}
\label{sec:PsiDOs}

To define pseudodifferential operators, we must incorporate complex powers of the Laplacian.  
But we begin with the appropriate notion of twisting in this context.  

Let $\Psi$ be an $\RR$-filtered subalgebra of $\Op$.  
In this context, a \emph{twisting} of $\Psi$ will be given by a complex one-parameter family of algebra automorphisms $(\Theta^z)_{z\in\CC}$ which preserves the filtration.   We will write $\Theta = \Theta^1$.  

\begin{remark}
Unlike for the algebras of differential operators above, here we will have the liberty to demand that $\Theta$ be an automorphism of the algebra $\Psi$, not just of the associated graded algebra, which in any case is problematic to define for an $\RR$-filtered algebra without some additional structure.
\end{remark}

\begin{definition}
 \label{def:GPsiDOs}
 A \emph{twisted algebra of abstract pseudodifferential operators} (abbreviated to \emph{twisted algebra of $\Psi$DOs}) is a subalgebra $\Psi \subseteq \Op$ equipped with a one-parameter family of algebra automorphisms $(\Theta^z)_{z\in\CC}$ such that
 \begin{enumerate}
  \item  $\Delta^{z}\Psi \subseteq \Psi$ and $\Psi\Delta^{z} \subseteq \Psi$ for all $z\in\CC$,
  \item $[\Delta^{\frac{z}{r}},\Psi^t]_{\Theta^z} \subseteq \Psi^{\Re(z)+t-1}$ for all $z\in\CC$, $t\in\RR$,
  \item $\Psi^0 \subseteq \Op^0$.
 \end{enumerate}

 Two twistings $\Theta^\bullet$ and ${\Theta'}^\bullet$ on $\Psi$ will be called \emph{equivalent} if for all $T\in\Psi^t$ and all $z\in\CC$, $\Theta^z(T)-{\Theta'}^z(T) \in \Psi^{t-1}$.
 
 We will say $\Delta$ is \emph{$\Theta$-central} if $[\Delta^{\frac{z}{r}},\Psi]_{\Theta^z} = 0$ for all $z\in\CC$.
\end{definition}

\begin{lemma}
 \label{lem:central}
 Let $\Psi$ and $\Theta^\bullet$ be as above.  There is an equivalent twisting ${\Theta'}^\bullet$ on $\Psi$ such that $\Delta$ is $\Theta'$-central.
\end{lemma}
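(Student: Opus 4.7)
The natural candidate for a $\Theta'$-central replacement of $\Theta^\bullet$ is inner conjugation by the complex powers of $\Delta$, namely
\[
{\Theta'}^z(T) \defeq \Delta^{\frac{z}{r}} T \Delta^{-\frac{z}{r}}.
\]
The plan is to check that this formula defines a one-parameter family of algebra automorphisms of $\Psi$, that it is equivalent to $\Theta^\bullet$ in the sense of Definition \ref{def:GPsiDOs}, and that $\Delta$ is manifestly $\Theta'$-central for it.

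First, I would verify that ${\Theta'}^z$ maps $\Psi$ to itself: this is immediate from axiom (1) applied twice, since multiplication by $\Delta^w$ on either side preserves $\Psi$ for any $w\in\CC$. The one-parameter property ${\Theta'}^z \circ {\Theta'}^w = {\Theta'}^{z+w}$ follows from the semigroup identity $\Delta^{\frac{z}{r}}\Delta^{\frac{w}{r}} = \Delta^{\frac{z+w}{r}}$, and the algebra homomorphism property is automatic for inner conjugation. The $\Theta'$-centrality is then true by construction: $\Delta^{\frac{z}{r}} T = {\Theta'}^z(T)\,\Delta^{\frac{z}{r}}$ for all $T\in\Psi$.

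The substance of the proof is the simultaneous verification that ${\Theta'}^z$ preserves the filtration and is equivalent to $\Theta^z$. Both follow from a single manipulation: for $T\in\Psi^t$, axiom (2) supplies $R\in\Psi^{\Re(z)+t-1}$ with
\[
\Delta^{\frac{z}{r}}T - \Theta^z(T)\Delta^{\frac{z}{r}} = R.
\]
Multiplying on the right by $\Delta^{-\frac{z}{r}}$ gives ${\Theta'}^z(T) - \Theta^z(T) = R\Delta^{-\frac{z}{r}}$, which lies in $\Psi^{t-1}$ since right multiplication by $\Delta^{-\frac{z}{r}}$ shifts the $\Psi$-filtration by $-\Re(z)$. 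This one line yields equivalence of the twistings, and combined with $\Theta^z(T)\in\Psi^t$ it shows ${\Theta'}^z(T)\in\Psi^t$, so ${\Theta'}^\bullet$ is filtration-preserving as required.

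The only subtlety to flag is the filtration-shift property $\Psi^t\Delta^w\subseteq\Psi^{t+\Re(w)}$ used in the last step. This is implicit in the setup of Definition \ref{def:GPsiDOs}—with the standard convention $\Psi^t = \Psi\cap\Op^t$ it is a direct consequence of $\Delta^w\in\Op^{\Re(w)}$—but it deserves to be stated explicitly, since without it axiom (2) itself would not determine the filtration degree of the commutator. Once it is in hand, the whole argument is a short manipulation of twisted commutators, with no serious obstacle.
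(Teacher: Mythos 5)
Your proof is correct and takes essentially the same approach as the paper: define ${\Theta'}^z(T)=\Delta^{z/r}T\Delta^{-z/r}$ and observe that ${\Theta'}^z(T)-\Theta^z(T)=[\Delta^{z/r},T]_{\Theta^z}\Delta^{-z/r}\in\Psi^{t-1}$, which is exactly the identity the paper records (for $z=1$). The filtration-shift property $\Psi^t\Delta^{z/r}=\Psi^{t+\Re(z)}$ that you flag as needing to be made explicit is indeed stated in the paper in the paragraph immediately following the lemma, derived from Condition (1) and the invertibility of $\Delta$ on $H^\infty$.
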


\begin{proof}
 One can take ${\Theta'}^z(T) = \Delta^{\frac{z}{r}} T \Delta^{-\frac{z}{r}}$. The identity
 \[
  \Theta^\prime(T) - \Theta(T) = [\Delta^\frac{1}{r}, T]_\Theta \Delta^{-\frac{1}{r}}
 \]
 shows that the two twistings are equivalent.
\end{proof}

From the invertibility of $\Delta$ on $H^\infty$, Condition (1) of Definition \ref{def:GPsiDOs} implies that $\Delta^\frac{z}{r} \Psi^t = \Psi^{\Re(z)+t}= \Psi^t \Delta^\frac{z}{r}$ for all $z\in\CC$, $t\in\RR$, and with Condition (3) we also get $\Psi^z \subseteq \Op^{\Re(z)}$.  In particular, the algebra $\Psi$ is completely determined by its subalgebra $\Psi^0$ of elements of order at most zero.  This motivates the next definition.

\subsection{Pseudodifferential operators of order at most $0$}
\label{sec:PsiDOos}

\begin{definition}
 A \emph{twisted algebra of abstract pseudodifferential operators of order at most zero} (abbreviated to \emph{twisted algebra of $\Psi$DO${}_0$s}) is an algebra $\sB$ of bounded operators on $H^\infty$, equipped with a linear automorphism $\theta$ such that
  $\sB$ is closed under the twisted derivation $\delta_\theta := [\Delta^\frac{1}{r},\slot]_\theta$.
\end{definition}

This is the structure which is closest to regularity for twisted spectral triples; see Section \ref{sec:regular}.

Note that, for all $b\in\sB$, we have
\begin{align}
\label{eq:B_conjugates}
  \Delta^\frac{1}{r} b \Delta^{-\frac{1}{r}} &= \theta(b) + \delta_\theta(b) \Delta^{-\frac{1}{r}} \in \sB + \sB\Delta^{-\frac{1}{r}}, \\
\label{eq:B_conjugates2}
    \Delta^{-\frac{1}{r}} b \Delta^{\frac{1}{r}} &= \theta^{-1}(b) + \Delta^{-\frac{1}{r}} \delta_\theta(\theta^{-1}(b))  \in \sB + \Delta^{-\frac{1}{r}}\sB,
\end{align}
Induction on $n$ shows that $\Delta^{\frac{n}{r}} b \Delta^{-\frac{n}{r}}$ is bounded for all $n\in\ZZ$.  This proves the following fact.

\begin{lemma}
\label{lem:B_in_Op}
If $\sB$ is a twisted algebra of $\PsiDO_0$s then $\sB \subseteq \Op^0$. 
\end{lemma}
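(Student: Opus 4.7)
The plan is to reduce the claim to a statement about iterated conjugates of $b$ by powers of $\Delta^{1/r}$, and then establish those conjugates by induction. By the interpolation characterization discussed in Section \ref{sec:topology}, an operator $T:H^\infty\to H^\infty$ lies in $\Op^0$ if and only if it extends continuously from $H^n$ to $H^n$ for every integer $n\in\ZZ$; by the norm formula \eqref{eq:Frechet_norms} (with $t=0$), this is equivalent to the boundedness of $\Delta^{n/r}T\Delta^{-n/r}$ on $H$ for every $n\in\ZZ$. So for a given $b\in\sB$ it will suffice to show that $\Delta^{n/r} b \Delta^{-n/r}\in\LL(H)$ for all $n\in\ZZ$.

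I will proceed by induction on $|n|$, as already flagged in the paragraph preceding the lemma. The base case $n=0$ is immediate, since elements of $\sB$ are bounded by hypothesis. For the step from $n\geq 0$ to $n+1$, I will apply \eqref{eq:B_conjugates} to conjugate one factor of $\Delta^{1/r}$ past $b$ and then use the commutativity of the $\Delta^{\pm 1/r}$ factors to rearrange:
\[
  \Delta^{\frac{n+1}{r}}\, b \,\Delta^{-\frac{n+1}{r}}
  = \Delta^{\frac{n}{r}}\bigl(\theta(b)+\delta_\theta(b)\Delta^{-\frac{1}{r}}\bigr)\Delta^{-\frac{n}{r}}
  = \Delta^{\frac{n}{r}}\theta(b)\Delta^{-\frac{n}{r}} + \bigl(\Delta^{\frac{n}{r}}\delta_\theta(b)\Delta^{-\frac{n}{r}}\bigr)\Delta^{-\frac{1}{r}}.
\]
Both $\theta(b)$ and $\delta_\theta(b)$ lie in $\sB$---the first because $\theta$ is by assumption a linear automorphism of $\sB$, the second by closure of $\sB$ under $\delta_\theta$---so the inductive hypothesis applies and both bracketed operators are bounded. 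Combined with $\Delta^{-1/r}\in\Op^{-1}\subseteq\LL(H)$, this shows the right-hand side is bounded. The parallel step from $n\leq 0$ to $n-1$ runs identically but uses \eqref{eq:B_conjugates2} and the corresponding rearrangement, with $\theta$ replaced by $\theta^{-1}$.

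I do not expect any genuine obstacle: the whole argument is already compressed into the two conjugation identities \eqref{eq:B_conjugates}--\eqref{eq:B_conjugates2} together with the interpolation fact from Section \ref{sec:topology}. The only subtlety to keep track of is that the induction must be applied not to $b$ itself but to the $\sB$-elements $\theta^{\pm 1}(b)$ and $\delta_\theta(\theta^{\pm 1}(b))$ produced after one conjugation step; this is precisely why it is essential that $\sB$ be closed simultaneously under $\theta$, $\theta^{-1}$, and $\delta_\theta$, which are exactly the hypotheses in the definition of a twisted algebra of $\PsiDO_0$s.
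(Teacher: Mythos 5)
Your proof is correct and follows exactly the path the paper sketches: establish the two conjugation identities \eqref{eq:B_conjugates}--\eqref{eq:B_conjugates2}, induct on $n$ (separately for $n\geq 0$ and $n\leq 0$) to get boundedness of $\Delta^{n/r}b\Delta^{-n/r}$ for all $b\in\sB$ and $n\in\ZZ$, then invoke the interpolation characterization of $\Op^0$ via \eqref{eq:Frechet_norms}. One tiny slip in your closing commentary: the positive-$n$ step applies the inductive hypothesis to $\theta(b)$ and $\delta_\theta(b)$, not $\delta_\theta(\theta(b))$; your displayed computation has this right, only the summary sentence is off.
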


\begin{remark}
\label{rmk:ignore_topology}
If one wanted to topologize $\sA$ or $\sB$, the $\Op^0$-topology of Section \ref{sec:topology} would be the appropriate one.  Given that, we should insist that the twisting $\theta$ be $\Op^0$-continuous.  In fact, for our main theorem, we will work in a much more algebraic context---namely, diagonalizable twistings (Definition \ref{def:diagonalizable})---which does not require $\Op^0$-continuity.  For these reasons, we will usually sweep the topology under the rug.
\end{remark}


\section{Equivalence of definitions}
\label{sec:equivalence}

This section is dedicated to the equivalence of the various notions above.     As usual, we fix an abstract Laplace operator $\Delta$ of order $r$.

\begin{definition}
 \label{def:compatible}
 Let $\sA$ be an algebra of operators on $H^\infty$ with a twisting $\theta$. 
 \begin{enumerate}
  \item A twisted algebra of DOs $\sD$ is \emph{compatible with $\sA$} if $\sA\subseteq \sD^0$ and its twisting $\theta$ extends that of $\sA$.
  \item A twisted algebra of $\PsiDO_0$s $\sB$ is \emph{compatible with $\sA$} if $\sA\subseteq\sB$ and its twisting $\theta$ extends that of $\sA$.
  \item A twisted algebra of $\PsiDO$s $\Psi$ is \emph{compatible with $\sA$} if $\sA\subseteq\Psi^0$ and its twisting $\Theta$ satisfies $\Theta(a) - \theta(a) \in \Psi^{-1}$ for all $a\in\sA$.
 \end{enumerate}
\end{definition}

One complication which arises in the twisted case is that, in passing from differential to pseudodifferential operators, we need to extend the twisting $\theta$ of $\sD$ to a complex one-parameter family of automorphisms $\Theta$ of $\Psi$.  Various conditions can be imposed to ensure this.  In this article, motivated by the examples arising in quantum groups, we work with a very algebraic condition on $\theta$.

\begin{definition}
\label{def:diagonalizable}
 We will say a linear map $\theta$ on a vector space $\sD$ is \emph{diagonalizable} if $\sD$ is the algebraic direct sum of the eigenspaces of $\theta$.
\end{definition}

\begin{theorem}
 Fix an abstract Laplace operator $\Delta$ of order $r$ and let $\sA$ be an algebra of linear operators on $H^\infty$ equipped with an algebra automorphism $\theta$.  
 \begin{enumerate}
  \item If $\sA$ admits a compatible twisted algebra of $\PsiDO$s, then it admits a compatible twisted algebra of $\PsiDOo$s.
  \item If $\sA$ admits a compatible twisted algebra of $\PsiDOo$s, then it admits a compatible twisted algebra of $\DO$s.
  \item If $\sA$ admits a compatible twisted algebra of $\DO$s, such that the twisting on this algebra is diagonalizable with positive spectrum, then it admits a compatible twisted algebra of $\PsiDO$s.
 \end{enumerate}
\end{theorem}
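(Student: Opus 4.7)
The plan is to prove each of the three implications by direct construction of the appropriate algebraic structure. For (1), I take $\sB := \Psi^0$, which is a subalgebra of $\Op^0$ and is closed under $\delta := [\Delta^{1/r}, \slot]_\Theta$ by condition (2) of Definition \ref{def:GPsiDOs}. The delicate point is that $\Theta|_\sA$ and $\theta$ agree only modulo $\Psi^{-1}$, whereas $\PsiDOo$-compatibility requires the twisting to restrict \emph{exactly} to $\theta$ on $\sA$. Exploiting the fact that a $\PsiDOo$ twisting need only be a linear isomorphism, I extend the linear map $\theta - \Theta : \sA \to \Psi^{-1}$ to some $\chi : \Psi^0 \to \Psi^{-1}$ and set $\tilde\theta := \Theta + \chi$. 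The identity $\delta_{\tilde\theta}(b) = \delta_\Theta(b) - \chi(b) \Delta^{1/r}$ keeps the result in $\Psi^0$, since $\chi(b) \Delta^{1/r} \in \Psi^{-1} \cdot \Delta^{1/r} \subseteq \Psi^0$.

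For (2), given a $\PsiDOo$ algebra $(\sB, \theta)$, I put $\sD^n := \sum_{k=0}^{n} \sB\, \Delta^{k/r}$ with twisting extended by $\theta(b \Delta^{k/r}) := \theta(b) \Delta^{k/r}$. Iterating the defining relation $\Delta^{1/r} b = \theta(b) \Delta^{1/r} + \delta_\theta(b)$ produces
\[
\Delta^{k/r} b = \theta^k(b)\Delta^{k/r} + \sum_{j=1}^k y_{k,j}\,\Delta^{(k-j)/r}, \qquad y_{k,j} \in \sB,
\]
from which multiplicative closure $\sD^m \sD^n \subseteq \sD^{m+n}$ and the twisted commutator estimate $[\Delta,\sD^m]_{\theta^r}\subseteq\sD^{m+r-1}$ both follow. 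The elliptic estimate reduces to boundedness of $\sB$ (Lemma \ref{lem:B_in_Op}) together with the isometric action of $\Delta^{k/r}$ on Sobolev spaces.

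For (3), the substantive implication, I define $\Psi^t$ as the linear span of operators $X \Delta^{z/r}$ with $X \in \sD^m$, $z \in \CC$, $m + \Re(z) \leq t$. Diagonalizability of $\theta$ with positive spectrum lets me define $\theta^w$ for all $w \in \CC$ by setting $\theta^w := \mu^w \cdot \id$ on each eigenspace $\sD_\mu$. The twisting $\Theta^w$ on $\Psi$ is required to be an algebra automorphism; since $\theta$ on $\sD$ is only linear (Remark \ref{rmk:symbol_aut}), a natural choice is $\Theta^w(T) := \Delta^{w/r} T \Delta^{-w/r}$, after which one verifies that this agrees with $\theta^w$ on $\sD$ modulo $\Psi^{-1}$. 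The crucial condition is (2) of Definition \ref{def:GPsiDOs}: for $X \in \sD_\mu \cap \sD^m$ one must have
\[
\Delta^{w/r} X - \mu^w X \Delta^{w/r} \in \Psi^{m + \Re(w) - 1}.
\]
The strategy is to establish an asymptotic expansion
\[
\Delta^{w/r} X \sim \mu^w X \Delta^{w/r} + \sum_{k \geq 1} c_k(w, \mu) \, Z_k(X) \, \Delta^{w/r - k},
\]
where $c_k(w, \mu)$ are the $\mu$-deformed binomial coefficients of Appendix \ref{sec:mu-Cauchy} and $Z_k(X) \in \sD^{m+k(r-1)}$ are built from iterated applications of $\nabla$. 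Each non-leading term lies in $\Psi^{m + \Re(w) - k}$, hence strictly lower filtration, and multiplicative closure of $\Psi$ then follows by using this expansion to move $\Delta^{z/r}$ past elements of $\sD$.

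The main obstacle is establishing this asymptotic expansion rigorously: identifying the correct coefficients $c_k(w, \mu)$, extracting the operator factors $Z_k(X)$ from the non-homogeneous behaviour of iterated $\nabla$-applications (since $\nabla(X)$ need not remain in $\sD_\mu$), and proving that the remainders lie in arbitrarily low filtration degree. The formal motivation is a Cauchy integral $\Delta^{w/r} = \tfrac{1}{2\pi i} \int_\gamma \lambda^{w/r}(\lambda - \Delta)^{-1}\, d\lambda$ combined with the homogeneous identity $\Delta X = \mu^r X \Delta + \nabla(X)$, but converting this into a precise statement about filtered algebras is the technical heart of Section \ref{sec:DO_to_PsiDO}.
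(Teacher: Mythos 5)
Your treatment of parts (1) and (2) follows the paper's strategy closely, but (2) has a small technical gap. You propose to put $\theta(b\Delta^{k/r}) := \theta(b)\Delta^{k/r}$ on $\sD^m = \sum_{k=0}^m \sB\Delta^{k/r}$, but this formula is not obviously well-defined: the decomposition of an element of $\sD^m$ as $\sum_k b_k\Delta^{k/r}$ is not unique (for instance $b\Delta^{k/r} = (b\Delta^{-1/r})\Delta^{(k+1)/r}$ whenever $\Delta^{-1/r}\in\sB$), so the formula could give different answers on different representations. The paper sidesteps this by first enlarging $\sB$ to contain $\Delta^{-1/r}$ (Lemma~\ref{lem:extension-pdo0}), then taking $\Theta(X) := \Delta^{1/r}X\Delta^{-1/r}$ --- an honest conjugation, hence unambiguously defined and manifestly filtration-preserving --- and finally perturbing by an arbitrary linear $\kappa : \sD \to \sD\Delta^{-1/r}$ extending $\theta - \Theta|_\sB$. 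The twisted commutator condition then drops out of the identity $[\Delta^{1/r}, X]_{\theta'} = -\kappa(X)\Delta^{1/r}$. Your version needs this repair to be rigorous.

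The more serious gap is in (3). You define $\Psi^t$ as the \emph{linear span} of operators $X\Delta^{z/r}$ with $X\in\sD^m$ and $m+\Re(z)\leq t$. But that space is not closed under multiplication, nor under the conjugation $\Theta^z$. In the product $X\Delta^{z/r}\cdot X'\Delta^{z'/r}$ one must move $\Delta^{z/r}$ past $X'$, and the result is an infinite asymptotic expansion
\[
\Delta^{z/r}X' \sim \theta^{z}(X')\Delta^{z/r} + \sum_{k\geq1}\sum_{\bfmu\in W(k)} \binom{z/r}{k}_{\!\!\bfmu}\nabla^\bfmu(X')\Delta^{z/r - k},
\]
in which the partial sums differ from the actual operator $\Delta^{z/r}X'$ by a remainder of arbitrarily low analytic order, but that remainder is not itself of the form $Y\Delta^{w/r}$ with $Y\in\sD$. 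Since the expansion does not terminate in general (iterated $\nabla$ need not vanish), the product of two elements of your $\Psi$ simply does not land back in your $\Psi$. The correct definition (Definition~\ref{def:PsiDO}) requires only that $P$ admit, \emph{for every} $l\in\RR$, a decomposition $P = X\Delta^{(z-m)/r}+Q$ with $Q\in\Op^l$; in particular $\Op^{-\infty}\subseteq\Psi^t$. This is not a cosmetic enlargement --- it is precisely what allows the asymptotic expansion to be read as an equation modulo $\Psi^{-\infty}$, which is what makes multiplicative closure and stability under $\Theta^z$ go through. You clearly have the right overall mechanism in mind (Cauchy integral, iterated resolvent identities, $\mu$-binomial coefficients), but with your stated definition of $\Psi^t$ the argument cannot close.
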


The first two statements are relatively straightforward, and we shall deal with them rapidly in the following two subsections.  The most profound of the three claims---and also the most useful in practice---is (3), which merits its own section (Section \ref{sec:DO_to_PsiDO}).

\subsection{From pseudodifferential operators to pseudodifferential operators of order at most $0$}

Let $\Psi$ be a twisted algebra of $\PsiDO$s compatible with $\sA$. Denote by $\theta$ the twisting of $\sA$ and by $\Theta$ the twisting of $\Psi$.   Consider the linear map $\kappa \defeq \theta-\Theta|_\sA:\sA \to \Psi^{-1}$.  If we make an arbitrary linear extension of this to a map $\kappa:\Psi^0 \to \Psi^{-1}$, then defining $\theta^\prime \defeq \Theta +\kappa: \Psi^0 \to \Psi^0$ gives an extension of $\theta:\sA\to\sA$.  Moreover, for any $b\in\Psi^0$ we have
\[
  \delta_{\theta^\prime}(b) = [\Delta^\frac{1}{r} ,b]_{\Theta}  - \kappa(b) \Delta^\frac{1}{r} 
   \in \Psi^0,
\]
so that $\Psi^0$ is invariant under $\delta_{\theta^\prime}$.  Hence $\Psi^0$ is a twisted algebra of $\PsiDOo$s compatible with $\sA$.

\subsection{From pseudodifferential operators of order at most $0$ to differential operators}

Let $\sB$ be a twisted algebra of $\PsiDOo$s. We say that  $\sB^\prime$ is an \emph{extension} of $\sB$ if $\sB \subseteq \sB^\prime$ and there is a linear extension of $\theta$ which makes $\sB^\prime$ into a twisted algebra of $\PsiDOo$s. If $\sB$ is compatible with an algebra $\sA$ then the same is true for $\sB^\prime$.

\begin{lemma}
\label{lem:extension-pdo0}
Let $\sB$ be a twisted algebra of $\PsiDOo$s. Then there is an extension $\sB^\prime$ of $\sB$ which contains $\Delta^{-\frac{1}{r}}$.  Then $\Theta \defeq \Delta^\frac{1}{r}\slot\Delta^{-\frac{1}{r}}$ is an automorphism of $\sB'$.
\end{lemma}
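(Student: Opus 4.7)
The plan is to take $\sB'$ to be the subalgebra of $\Op^0 \subseteq \LL(H)$ generated by $\sB$ and $\Delta^{-\frac{1}{r}}$; it lies in $\Op^0$ by Lemma \ref{lem:B_in_Op} combined with $\Delta^{-\frac{1}{r}} \in \Op^{-1} \subseteq \Op^0$. There are two things to verify: that $\Theta = \Delta^{\frac{1}{r}} \slot \Delta^{-\frac{1}{r}}$ restricts to an algebra automorphism of $\sB'$, and that $\sB'$ admits a linear extension $\theta'$ of $\theta$ making it into a twisted algebra of $\PsiDOo$s.

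The first statement is direct. Since $\Theta$ is conjugation by the invertible operator $\Delta^{\frac{1}{r}}$, it is a multiplicative bijection of $\LL(H)$, so it suffices to check that $\Theta^{\pm 1}$ send the generators of $\sB'$ into $\sB'$. For $\Delta^{-\frac{1}{r}}$ this is trivial. For $b \in \sB$, formula \eqref{eq:B_conjugates} gives $\Theta(b) = \theta(b) + \delta_\theta(b) \Delta^{-\frac{1}{r}} \in \sB + \sB\Delta^{-\frac{1}{r}} \subseteq \sB'$, and \eqref{eq:B_conjugates2} yields the analogous statement for $\Theta^{-1}$.

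The second statement is the more substantial one, and its conceptual content is the following observation: for \emph{any} linear map $\theta'\colon\sB'\to\sB'$, the identity $\Delta^{\frac{1}{r}} T = \Theta(T)\Delta^{\frac{1}{r}}$ yields
\begin{equation*}
 \delta_{\theta'}(T) \;=\; \Delta^{\frac{1}{r}} T - \theta'(T)\Delta^{\frac{1}{r}} \;=\; \bigl(\Theta(T) - \theta'(T)\bigr)\, \Delta^{\frac{1}{r}}.
\end{equation*}
Hence closure of $\sB'$ under $\delta_{\theta'}$ is equivalent to $(\Theta - \theta')(\sB') \subseteq \sB'\Delta^{-\frac{1}{r}}$. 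On $\sB$ this already holds with $\theta' = \theta$, because \eqref{eq:B_conjugates} gives $\Theta(b) - \theta(b) = \delta_\theta(b)\Delta^{-\frac{1}{r}}$. Accordingly I would define $\theta' \defeq \Theta - \kappa$, where $\kappa\colon \sB' \to \sB'\Delta^{-\frac{1}{r}}$ is a linear map with $\kappa(b) = \delta_\theta(b)\Delta^{-\frac{1}{r}}$ for $b\in\sB$, and extend it to the rest of $\sB'$ by setting $\kappa(\Delta^{-\frac{1}{r}}) = 0$ and $\kappa \equiv 0$ on a linear complement of $\sB + \CC\Delta^{-\frac{1}{r}}$.

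The main obstacle is then to ensure that this $\theta'$ is a linear \emph{bijection} of $\sB'$, and not merely an endomorphism. Since $\Theta$ is already bijective and $\kappa$ takes values in the proper subspace $\sB'\Delta^{-\frac{1}{r}}$, this reduces to a linear algebra check that the perturbation $\Theta - \kappa$ remains injective; the freedom in choosing $\kappa$ on the complement gives enough room to arrange this. Once bijectivity is in place, $\theta'|_\sB = \theta$ holds by construction, and $\delta_{\theta'}(T) = \kappa(T)\Delta^{\frac{1}{r}} \in \sB'\Delta^{-\frac{1}{r}} \cdot \Delta^{\frac{1}{r}} = \sB'$ is automatic, producing the desired $\PsiDOo$-structure on $\sB'$ extending $\sB$.
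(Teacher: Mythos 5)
Your argument is essentially the paper's: take $\sB'$ to be the algebra generated by $\sB$ and $\Delta^{-\frac{1}{r}}$, use \eqref{eq:B_conjugates} and \eqref{eq:B_conjugates2} to see that $\Theta$ is an algebra automorphism of $\sB'$, and then perturb $\Theta$ by a linear map $\kappa$ valued in $\sB'\Delta^{-\frac{1}{r}}$ that agrees with $\theta - \Theta|_\sB$ on $\sB$, so that $\delta_{\theta'}(x) = -\kappa(x)\Delta^{\frac{1}{r}} \in \sB'$. The sign convention on $\kappa$ is the only cosmetic difference.

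Where you go further than the paper is in insisting that $\theta'$ be a linear bijection, and your concern is genuine: the definition of a twisted algebra of $\PsiDOo$s requires a linear automorphism (cf.\ Definition~\ref{def:regular}), and the published proof simply calls $\theta' = \Theta + \kappa$ a ``linear extension'' without checking invertibility. Your sketched resolution, however, does not close this gap. First, the specific $\kappa$ you propose (vanishing on $\Delta^{-\frac{1}{r}}$ and on a complement of $\sB + \CC\Delta^{-\frac{1}{r}}$) makes $\theta'$ act as $\Theta$ off $\sB$, and nothing forces $\Theta$ to carry that complement transversally to $\sB$, so even injectivity can fail. Second, ``invertible plus a map into a proper subspace'' does not reduce to an injectivity check in infinite dimensions; surjectivity must be arranged as well. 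A concrete fix: observe that $\sB'\Delta^{-\frac{1}{r}} = \Delta^{-\frac{1}{r}}\sB'$ is a two-sided ideal of $\sB'$ preserved by $\Theta$; that $\sB' = \sB + \sB'\Delta^{-\frac{1}{r}}$ (push any occurrence of $\Delta^{-\frac{1}{r}}$ to the left in a word using $b\Delta^{-\frac{1}{r}} = \Delta^{-\frac{1}{r}}\Theta(b)$); and that $\theta'$ preserves $\sB$ because $\theta'|_\sB = \theta$. Consequently $\theta'$ is bijective if and only if the induced map on $\sB'/\sB$ is, and one can choose $\kappa$ on a complement $V$ of $\sB$ --- using that the projection of $\sB'\Delta^{-\frac{1}{r}}$ onto $V$ is surjective --- to make that induced map the identity. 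This remark applies equally to the paper's own proof.
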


\begin{proof}
We define $\sB^\prime$ as the algebra generated by $\sB$ and $\Delta^{-\frac{1}{r}}$.
Then $\sB^\prime \subseteq \mathrm{Op}^0$ since $\Delta^{-\frac{1}{r}} \in \mathrm{Op}^0$.
Equations \eqref{eq:B_conjugates} and \eqref{eq:B_conjugates2} show that $\Theta$ is a well-defined automorphism of $\sB^\prime$.  Now we consider the difference between $\theta$ and $\Theta|_\sB$. For all $b\in \sB$ we have
\[
 \theta(b) - \Theta(b) = (\theta(b)\Delta^\frac{1}{r} - \Delta^\frac{1}{r} b) \Delta^{-\frac{1}{r}} 
 = -\delta_\theta(b) \Delta^{-\frac{1}{r}} 
   \in \sB \Delta^{-\frac{1}{r}}.
\]
We let $\kappa : \sB^\prime \to \sB^\prime \Delta^{-\frac{1}{r}}$ be an arbitrary, not necessarily continuous, linear extension of the map $\theta - \Theta|_\sB : \sB \to \sB \Delta^{-\frac{1}{r}}$. Then defining
\[
\theta^\prime = \Theta + \kappa : \sB^\prime \to \sB^\prime
\]
gives a linear extension of $\theta : \sB \to \sB$.
Finally to show that $\sB^\prime$ is closed under the twisted derivation $\delta_{\theta^\prime} = [\Delta^\frac{1}{r}, \slot]_{\theta^\prime}$ we write
\[
 [\Delta^\frac{1}{r}, x]_{\theta^\prime} = [\Delta^\frac{1}{r}, x]_\Theta + (\Theta(x) - \theta^\prime(x)) \Delta^{\frac{1}{r}}
 = - \kappa(x) \Delta^\frac{1}{r} \in \sB^\prime
\]
for $x\in\sB'$, where we have used the fact that $\kappa(x) \in  \sB^\prime \Delta^{-\frac{1}{r}}$.
\end{proof}

In view of the previous lemma, we will assume in the following that $\sB$ is a twisted algebra of $\PsiDOo$s compatible with $\sA$ such that $\Delta^{-\frac{1}{r}} \in \sB$.

\begin{proposition}
Let $\sD = \bigcup_{m \in \mathbb{N}} \sD^m$ where
\[
\sD^m = \sum_{k = 0}^m \sB\Delta^{\frac{k}{r}}.
\]
Then there is an extension of $\theta : \sB \to \sB$ to a linear automorphism of $\sD$ making it into a twisted algebra of $\DO$s compatible with $\sA$.
\end{proposition}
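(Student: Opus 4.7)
The plan is to adapt the construction of Lemma \ref{lem:extension-pdo0}. The first step is to record the structural features of $\sD$. Iterating equation \eqref{eq:B_conjugates} gives $\Delta^{n/r}\sB \subseteq \sum_{k=0}^n \sB\Delta^{k/r}$, which makes $\sD$ into an $\NN$-filtered subalgebra of $\Op$. Because $\Delta^{-1/r}\in\sB$, each term $\sB\Delta^{k/r}$ with $k\le m$ is absorbed into $\sB\Delta^{m/r}$, so in fact $\sD^m = \sB\Delta^{m/r}$. In particular $\sD^m\subseteq\Op^m$ by Lemma \ref{lem:B_in_Op}, so the elliptic estimate will follow automatically from Lemma \ref{lem:analytic_order} once the twisted commutator condition is verified.

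The twisting is then defined, in the spirit of Lemma \ref{lem:extension-pdo0}, by $\theta_{\sD} := \Theta + \kappa$. Here $\Theta := \Delta^{1/r}\slot\Delta^{-1/r}$ is conjugation by $\Delta^{1/r}$, which restricts to a filtration-preserving algebra automorphism of $\sD$ (since it is one of $\sB$ by Lemma \ref{lem:extension-pdo0} and $\Theta(b\Delta^{m/r}) = \Theta(b)\Delta^{m/r}$). The correction $\kappa$ is any linear extension to $\sD$ of the map $\theta - \Theta|_{\sB} : \sB \to \sB\Delta^{-1/r}$ which vanishes on a chosen linear complement $V$ of $\sB$ in $\sD$, refined so that $V \cap \sD^m$ is a complement of $\sB$ in $\sD^m$ for every $m$. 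Since $\kappa(\sD)\subseteq\sB\Delta^{-1/r}\subseteq\sD^0$, $\theta_{\sD}$ preserves the filtration, and $\theta_{\sD}|_{\sB} = \theta$ by construction, so compatibility with $\sA$ is automatic. Using the decomposition $\sD = \sB\oplus V$ and the fact that $\Theta$ induces a bijection on $\sD/\sB\cong V$, one sees that $\theta_{\sD}|_{\sD^m} : \sD^m \to \sD^m$ is bijective for every $m$.

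For the twisted commutator condition I use $\Delta X = \Theta^r(X)\Delta$ to rewrite $\nabla(X) = (\Theta^r - \theta_{\sD}^r)(X)\Delta$, and expand $(\Theta+\kappa)^r - \Theta^r$ as a sum of length-$r$ words in $\Theta$ and $\kappa$, each containing at least one $\kappa$. Once a $\kappa$ fires, the intermediate result lies in $\sB\Delta^{-1/r}$, a subspace preserved by both $\Theta$ (which fixes $\Delta^{-1/r}$ and stabilises $\sB$) and $\kappa$ (whose image is already contained in $\sB\Delta^{-1/r}$). Hence $(\Theta^r - \theta_{\sD}^r)(X) \in \sB\Delta^{-1/r}$ for every $X\in\sD$, and right multiplication by $\Delta$ lands in $\sB\Delta^{(r-1)/r} = \sD^{r-1} \subseteq \sD^{m+r-1}$.

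The main obstacle, as in Lemma \ref{lem:extension-pdo0}, is the well-definedness of $\theta_{\sD}$. The naive formula $b\Delta^{k/r} \mapsto \theta(b)\Delta^{k/r}$ is inconsistent across representations of the same operator: the identity $b\Delta^{m/r} = (b\Delta^{-1/r})\Delta^{(m+1)/r}$ would force $\theta(b\Delta^{-1/r}) = \theta(b)\Delta^{-1/r}$, a compatibility with right multiplication by $\Delta^{-1/r}$ that is not part of the hypotheses. The Hamel-basis style definition of $\kappa$ sidesteps this precisely because its image lies in $\sB\Delta^{-1/r} \subseteq \sB$, of strictly lower analytic order, and is therefore invisible to the word-counting argument used to bound $\nabla$.
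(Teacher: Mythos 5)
Your proof is correct, and it differs from the paper's in two interesting ways. First, you make the specific choice that $\kappa$ vanishes on a filtration-compatible complement of $\sB$ in $\sD$, so that $\kappa(\sD)\subseteq\sB\Delta^{-1/r}$. The paper only requires $\kappa:\sD^m\to\sD^m\Delta^{-1/r}$, which for $m>0$ is a much larger target. Second, and more substantively, your verification of the twisted-commutator condition is different: you expand $\theta_\sD^r=(\Theta+\kappa)^r$ into words and observe that any word containing a $\kappa$ lands in the $\Theta$- and $\kappa$-stable subspace $\sB\Delta^{-1/r}$, giving $\nabla(X)\in\sB\Delta^{(r-1)/r}=\sD^{r-1}$ for \emph{all} $X$; the paper instead uses the Leibniz-type identity $[\Delta^{n/r},X]_{\theta'^n}=\Delta^{1/r}[\Delta^{(n-1)/r},X]_{\theta'^{n-1}}+[\Delta^{1/r},\theta'^{n-1}(X)]_{\theta'}\Delta^{(n-1)/r}$ and inducts on $n$, an argument that is valid for the more general $\kappa$ but gives only the expected bound $\sD^{m+r-1}$. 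Your word-counting argument is cleaner and gives a stronger estimate, at the cost of needing the specific $\kappa$; the paper's induction is more robust under relaxing the choice of extension. You also address bijectivity of $\theta_\sD$ on each $\sD^m$ (via the ``triangular'' structure of $\Theta+\kappa$ with respect to $\sD=\sB\oplus V$), which the paper passes over silently even though the definition of a twisted algebra of $\DO$s requires it. Both proofs hinge on the same key idea of writing $\theta'=\Theta+\kappa$, so the difference is in the technical verification, not the strategy.
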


\begin{proof}
It follows from Lemma \ref{lem:B_in_Op} that $\sD^m \subseteq \Op^m$ for all $m \in \mathbb{N}$.  Therefore, by Lemma \ref{lem:analytic_order}, it suffices to show that $[\Delta,\sD^m]_{\theta^\prime} \subseteq \sD^{m + r - 1}$, where $\theta^\prime$ is an appropriate linear extension of $\theta$ to $\sD$.

Consider the automorphism $\Theta: \sD \to \sD$ defined by $\Theta(X) = \Delta^\frac{1}{r} X \Delta^{-\frac{1}{r}}$. It is well defined since $\sB$ is stable under conjugation by $\Delta^\frac{1}{r}$. Moreover it preserves the filtration of $\sD$. Indeed for $b \Delta^{\frac{m}{r}} \in \sD^m$ we have
\[
\Theta(b \Delta^{\frac{m}{r}}) = \delta_\theta(b) \Delta^{\frac{m - 1}{r}} + \theta(b) \Delta^{\frac{m}{r}} \in \sD^m.
\]
Proceeding as in Lemma \ref{lem:extension-pdo0}, we let $\kappa : \sD \to \sD \Delta^{-\frac{1}{r}}$ be an arbitrary linear extension of the map $\theta - \Theta|_\sB : \sB \to \sB \Delta^{-\frac{1}{r}}$ such that $\kappa:\sD^m \to \sD^{m}\Delta^{-\frac{1}{r}}$ for all $m$. Then we define the linear map
\[
 \theta^\prime = \Theta + \kappa : \sD \to \sD,
\]
which gives a filtration-preserving extension of $\theta$. Now we look at the twisted commutator condition. First, for all $X \in \sD^m$ we have
\[
 [\Delta^\frac{1}{r} , X]_{\theta^\prime} = [\Delta^\frac{1}{r}, X]_\Theta + (\Theta(X) - \theta^\prime(X)) \Delta^{\frac{1}{r}}
 = - \kappa(X) \Delta^\frac{1}{r} \in \sD^m.
\]
Next for all $n \in \mathbb{N}$ we have the algebraic identity
\[
[\Delta^\frac{n}{r}, X]_{\theta^{\prime n}} = \Delta^\frac{1}{r} [\Delta^\frac{n - 1}{r}, X]_{\theta^{\prime n - 1}} + [\Delta^\frac{1}{r}, \theta^{\prime n - 1}(X)]_{\theta^\prime} \Delta^\frac{n - 1}{r}.
\]
Since $[\Delta^\frac{1}{r} , X]_{\theta'} \in \sD^m$, induction in $n$ shows that $[\Delta^\frac{n}{r}, X]_{\theta^{\prime n}} \in \sD^{m + n - 1}$ for all $n$. We find in particular that $[\Delta, X]_{\theta^{\prime r}} \in \sD^{m + r - 1}$, which concludes the proof.
\end{proof}


\section{From differential operators to pseudodifferential operators}
\label{sec:DO_to_PsiDO}

As already mentioned, the key point in passing to pseudodifferential operators is to introduce complex powers of the Laplace operator.  This is achieved as follows.

\begin{definition}
\label{def:PsiDO}
 Let $\sD$ be a twisted algebra of $\DO$s.  A linear operator $P$ on $H^\infty$ is called a \emph{basic} (or \emph{step 1}) \emph{pseudodifferential operator} of order at most $t\in\RR$ if, for any $l\in\RR$ there exists a decomposition of the form
 \begin{equation}
 \label{eq:PsiDO}
  P = X \Delta^{\frac{z-m}{r}} + Q,
 \end{equation}
 where
 \begin{itemize}
  \item $X \in \sD^m$, for some $m\in\NN$,
  \item $\Re(z) \leq t$, and
  \item $Q \in \Op^l$.
 \end{itemize}
 Then $\Psi^t$ is defined to be the space of finite sums of basic pseudodifferential operators of order at most $t$.
\end{definition}

The main technical point is to prove that $\Psi$ is an algebra.  From the definition, one sees that the key issue is to commute a complex power of $\Delta$ past $X'\in\sD$.  
For this, the main tool is the Cauchy integral formula:
\begin{equation}
   \Delta^{z} = \frac{1}{2\pi i} \int_\Gamma \lambda^z (\lambda-\Delta)^{-1} \, d\lambda
\end{equation}
where $\Gamma$ is a vertical contour which separates the spectrum of $\Delta$ from $0$, and $\Re(z)<0$.  We must therefore take a short digression through the analysis of such formulas.


\subsection{Resolvent identities}

We begin with a twisted algebra of $\DO$s $\sD$ compatible with $\sA$.  Since we are assuming that the twisting $\theta$ is diagonalizable, with positive spectrum, we can unambiguously define the complex powers $\theta^z$ with $z\in\CC$.  
We will refer to the eigenvalues of $\theta^r$ as \emph{weights}.   

Recall that we write $\nabla = [\Delta,\slot]_{\theta^r}$ for the twisted commutators with $\Delta$.
We begin by deriving some identities involving the resolvent of $\Delta$.  If $\mu\in\RR_+$, we will put 
\begin{equation*}
R(\mu) = (\lambda - \mu \Delta)^{-1}.
\end{equation*}
More generally for a multi-index $\bfmu = (\mu_0, \mu_1, \cdots, \mu_k)$ we set $R(\bfmu) = \prod_{i = 0}^k R(\mu_i)$.

\begin{lemma}
Let $X\in\sD$ be homogeneous of weight $\mu$, that is $\theta^r(X) = \mu X$. Then
\[
R(1) X = X R(\mu) + R(1) \nabla(X) R(\mu).
\]
\end{lemma}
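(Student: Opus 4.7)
The plan is to reduce the identity to a purely algebraic manipulation of $(\lambda - \Delta)$, $(\lambda - \mu\Delta)$, and $X$, using that $\nabla(X) = \Delta X - \theta^r(X)\Delta = \Delta X - \mu X \Delta$ by the weight hypothesis.

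First I would observe the one-line algebraic identity
\[
 (\lambda - \Delta) X = X(\lambda - \mu\Delta) - \nabla(X),
\]
which follows immediately by expanding both sides and using $\Delta X = \mu X \Delta + \nabla(X)$.

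Next I would multiply this identity on the left by $R(1) = (\lambda - \Delta)^{-1}$ and on the right by $R(\mu) = (\lambda - \mu\Delta)^{-1}$. Because $R(1)(\lambda - \Delta) = \id$ and $(\lambda - \mu\Delta)R(\mu) = \id$ on $H^\infty$, the left-hand side collapses to $X R(\mu)$, while the right-hand side becomes $R(1) X - R(1) \nabla(X) R(\mu)$. Rearranging gives the claimed formula.

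The only things to keep track of are (i) that $\lambda$ is chosen so that both $R(1)$ and $R(\mu)$ exist (automatic for $\lambda$ on a contour separating the spectrum of $\Delta$ from $0$, since $\mu > 0$ by the positivity of the spectrum of $\theta$), and (ii) that all the operators involved preserve $H^\infty$, so the compositions are meaningful as operators on this domain. There is no real obstacle here; the lemma is essentially a bookkeeping identity that shifts $X$ past $R(1)$ at the cost of introducing the resolvent $R(\mu)$ adjusted to the weight $\mu$ of $X$, plus a $\nabla(X)$-correction term which will be the seed for the asymptotic expansion developed in the sequel.
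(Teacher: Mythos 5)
Your proof is correct and is essentially the same as the paper's: both start from the algebraic identity $\nabla(X) = X(\lambda - \mu\Delta) - (\lambda - \Delta)X$ (yours is just this rearranged) and multiply on the left by $R(1)$ and on the right by $R(\mu)$. The additional remarks about positivity of $\mu$ and preservation of $H^\infty$ are sensible bookkeeping but not a departure from the paper's argument.
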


\begin{proof}
This follows by multiplying the identity
 \[
  \nabla(X) = \Delta X - \mu X \Delta = X (\lambda - \mu \Delta) - (\lambda - \Delta) X.
 \]
 on the left and the right by $R(1)$ and $R(\mu)$, respectively.
\end{proof}

For successive iterations of this formula, we will need to decompose $\nabla(X)$ and its iterates into homogeneous components.  Let $W\subset\RR_+$ be the set of weights of $\theta^r$.  If $X\in\sD$ and $\mu\in W$, let us write $X^\mu$ for its component of weight $\mu$.  Under our hypothesis of diagonalizability, we have $X= \sum_\mu X^\mu$ and the sum always contains only finitely many nonzero terms.  

\begin{definition}
 Let us write $W(k) = W^{k+1}$ for the set of $(k+1)$-tuples of weights.  For $\bfmu = (\mu_0,\ldots,\mu_k) \in W(k)$, we define $\nabla^{\bfmu}(X)$ iteratively by setting $\nabla^{\bfmu}(X) = X^{\mu_0}$ when $k = 0$ and 
\[\nabla^{\bfmu}(P) = (\nabla(\nabla^{\bfmu^\prime}(P)))^{\mu_k}, \quad \text{when }\bfmu^\prime = (\mu_0, \mu_1, \cdots, \mu_{k - 1}).
\]
\end{definition}


\begin{proposition}
\label{prop:expansion}
Let $X \in \mathcal{D}$.  For any $n \in \mathbb{N}$ we have the expansion
\begin{equation}
\label{eq:expansion}
R(1) X = \sum_{k = 0}^n \sum_{\bfmu \in W(k)} \nabla^{\bfmu}(X) R(\bfmu) + R(1) \sum_{\bfmu \in W(n)} \nabla (\nabla^{\bfmu}(X)) R(\bfmu).
\end{equation}
\end{proposition}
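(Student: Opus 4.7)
I would prove the expansion by induction on $n$, using the preceding lemma (the one-step resolvent identity for homogeneous $X$) as the key atomic fact. The diagonalizability hypothesis enters in order to decompose every element of $\sD$ into a finite sum of $\theta^r$-eigenvectors so that the one-step identity applies.

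\emph{Base case $n=0$.} Writing $X = \sum_{\mu_0 \in W} X^{\mu_0}$ (a finite sum by diagonalizability, with each $X^{\mu_0}$ again in $\sD$ since $\theta$ preserves the filtration), I apply the preceding lemma to each homogeneous summand to obtain
\[
 R(1) X^{\mu_0} = X^{\mu_0} R(\mu_0) + R(1)\, \nabla(X^{\mu_0}) R(\mu_0).
\]
Summing over $\mu_0$ and recognizing $X^{\mu_0} = \nabla^{(\mu_0)}(X)$ yields the formula for $n=0$.

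\emph{Inductive step.} Assume the formula holds for $n$. The only term to reprocess is the remainder
\[
 R(1) \sum_{\bfmu \in W(n)} \nabla\bigl(\nabla^{\bfmu}(X)\bigr) R(\bfmu).
\]
For each fixed $\bfmu$, I decompose $\nabla(\nabla^{\bfmu}(X))$ into its $\theta^r$-homogeneous parts; by the very definition of $\nabla^\bullet$, the weight-$\mu_{n+1}$ component is precisely $\nabla^{(\bfmu,\mu_{n+1})}(X)$, so
\[
 \nabla\bigl(\nabla^{\bfmu}(X)\bigr) = \sum_{\mu_{n+1} \in W} \nabla^{(\bfmu,\mu_{n+1})}(X),
\]
again a finite sum. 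Applying the one-step lemma to each homogeneous summand gives
\[
 R(1)\, \nabla^{(\bfmu,\mu_{n+1})}(X) = \nabla^{(\bfmu,\mu_{n+1})}(X) R(\mu_{n+1}) + R(1)\,\nabla\bigl(\nabla^{(\bfmu,\mu_{n+1})}(X)\bigr) R(\mu_{n+1}).
\]
Multiplying on the right by $R(\bfmu)$ and using that the $R(\mu_i)$ all commute (they are functions of $\Delta$), I recombine $R(\mu_{n+1}) R(\bfmu) = R\bigl((\bfmu,\mu_{n+1})\bigr)$. Summing over $\bfmu \in W(n)$ and $\mu_{n+1} \in W$ corresponds exactly to summing over $W(n+1)$, which produces the level-$(n+1)$ contribution plus the new remainder, completing the induction.

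\emph{Where the real work is.} The calculation itself is quite mechanical; the only point to be careful about is the bookkeeping. Specifically, one must verify that each object $\nabla^{\bfmu}(X)$ genuinely lies in $\sD$ (so that the one-step lemma applies) and is homogeneous of weight $\mu_k$ (so that $R(\mu_k)$ is the correct resolvent to appear on its right). Both facts are built into the definition: projecting onto the $\mu$-eigenspace of $\theta^r$ preserves membership in $\sD$ because $\theta$ preserves the filtration, and $\nabla$ maps $\sD$ to itself (shifting the filtration by $r-1$). After that, the proof is essentially a telescoping of the one-step identity; I expect no genuine obstacle beyond keeping multi-indices straight.
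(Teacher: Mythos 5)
Your proof is correct and follows essentially the same inductive argument as the paper: decompose into $\theta^r$-homogeneous components, apply the one-step resolvent identity, and recombine the resolvents. The only cosmetic difference is that you spell out the base case slightly more explicitly; the inductive step matches the paper's reasoning line for line.
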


\begin{proof}
For $n = 0$ the identity is
\[
R(1) X = \sum_{\mu \in W} X^\mu R(\mu) + R(1) \sum_{\mu \in W} \nabla(X^\mu) R(\mu),
\]
which holds by the previous lemma.
We proceed by induction, assuming Equation \eqref{eq:expansion} holds for some $n$.
Let $\bfmu = (\mu_0, \mu_1, \cdots, \mu_n) \in W(n)$. We have
\[
\nabla(\nabla^{\bfmu}(X)) = \sum_{\mu_{n + 1} \in W} \nabla^{\bfmu^\prime}(X),
\]
where we have defined $\bfmu^\prime = (\mu_0, \mu_1, \cdots, \mu_n, \mu_{n + 1}) \in W(n + 1)$. The term $\nabla^{\bfmu^\prime}(X)$ has weight $\mu_{n + 1}$. Then we can use the lemma to rewrite
\[
R(1) \nabla^{\bfmu^\prime}(X) = \nabla^{\bfmu^\prime}(X) R(\mu_{n + 1}) + R(1) \nabla(\nabla^{\bfmu^\prime}(X)) R(\mu_{n + 1}).
\]
Applying this to the last term of Equation \eqref{eq:expansion}, we get
\[
\begin{split}
R(1) X & = \sum_{k = 0}^n \sum_{\bfmu \in W(k)} \nabla^{\bfmu}(X) R(\bfmu) + \sum_{\bfmu \in W(n)} \sum_{\mu_{n + 1} \in W} \nabla^{\bfmu^\prime}(X) R(\mu_{n + 1}) R(\bfmu) \\
& + R(1) \sum_{\bfmu \in W(n)} \sum_{\mu_{n + 1} \in W} \nabla(\nabla^{\bfmu^\prime}(X)) R(\mu_{n + 1}) R(\bfmu).
\end{split}
\]
Since $R(\mu_{n + 1}) R(\bfmu) = R(\bfmu^\prime)$, we obtain the result.
\end{proof}


\subsection{Commutators with complex powers of the Laplacian}
\label{sec:Psi}

The next step is to use the resolvent expansion of Equation \eqref{eq:expansion} and the Cauchy Integral Formula to obtain an expansion of $\Delta^z X$ with complex powers of $\Delta$ on the right.  

The resulting formulas involve certain generalized binomial coefficients $\binom{z}{n}_{\!\bfmu}$, of which both the standard and $q$-binomial coefficients are special cases.   We will leave these constants as a black box for the moment.  The details are given in Appendix \ref{sec:mu-binomial}, where we also prove the following quantum analogue of Cauchy's Integral Formula.

\begin{lemma}
\label{lem:mu-Cauchy-Delta}
  Let $\bfmu=(\mu_0,\ldots,\mu_n) \in W(n)$.  For any $z\in\CC$ with $\Re(z)<0$ we have
  \[
    \frac{1}{2\pi i} \int_\Gamma \lambda^z R(\bfmu) \, d\lambda = \binom{z}{n}_{\!\!\bfmu}\Delta^{z-n},
  \]
  where $\Gamma$ is a vertical contour separating the spectrum of $\mu_i\Delta$ from $0$ for every $i$, and where $\binom{z}{n}_{\!\bfmu}$ is the generalized binomial coefficient of Section \ref{sec:mu-binomial}.
\end{lemma}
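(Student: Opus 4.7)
The plan is to reduce the operator identity to a scalar contour integral via the spectral theorem for the positive operator $\Delta$, evaluate that integral by iterated partial fractions combined with the classical Cauchy integral formula for fractional powers, and finally identify the resulting scalar with the generalized binomial coefficient $\binom{z}{n}_{\!\!\bfmu}$ constructed in the Appendix.

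First I would invoke the spectral theorem to reduce to the pointwise scalar identity
\[
  \frac{1}{2\pi i}\int_\Gamma \lambda^z \prod_{i=0}^n (\lambda-\mu_i t)^{-1}\, d\lambda = \binom{z}{n}_{\!\!\bfmu}\, t^{z-n}
\]
for each $t$ in the spectrum of $\Delta$.  On the vertical contour $\Gamma$ separating the points $\{\mu_i t\}$ from $0$, the integrand decays like $|\Im\lambda|^{\Re(z)-n-1}$, so absolute convergence is immediate for $\Re(z)<0$ and the corresponding functional-calculus argument is routine.

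Next, assuming first that the weights $\mu_0,\ldots,\mu_n$ are pairwise distinct, I would expand $R(\bfmu)$ by iterated application of the resolvent identity
\[
  R(\mu_i) R(\mu_j) = \frac{1}{(\mu_i-\mu_j)\Delta}\bigl(R(\mu_i)-R(\mu_j)\bigr),
\]
which is operator-theoretic partial fractions in $\lambda$.  This writes $R(\bfmu)$ as $\Delta^{-n}$ times a linear combination of single resolvents $R(\mu_i)$ with scalar coefficients depending only on $\bfmu$.  Applying the standard Cauchy formula
\[
  \frac{1}{2\pi i}\int_\Gamma \lambda^z (\lambda-\mu_i \Delta)^{-1}\, d\lambda = \mu_i^z\,\Delta^z
\]
term by term yields $\Delta^{z-n}$ multiplied by a scalar function of $\bfmu$ and $z$, namely a linear combination of the $\mu_i^z$ with coefficients of the form $\prod_{j\neq i}(\mu_i-\mu_j)^{-1}$.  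By the very construction in the Appendix, this scalar is $\binom{z}{n}_{\!\!\bfmu}$.

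Finally, to remove the genericity assumption on the weights, I would use a continuity argument: both sides are continuous in $\bfmu\in\RR_+^{n+1}$ (the integral by dominated convergence, and the generalized binomial coefficients by construction in the Appendix), so the identity extends from the dense open subset of tuples with pairwise distinct entries to all of $W(n)$.  The main obstacle I expect is not the analysis but the bookkeeping needed to match the residue/partial-fraction computation with the precise combinatorial definition of $\binom{z}{n}_{\!\!\bfmu}$ adopted in Appendix \ref{sec:mu-binomial}; once that definition is unfolded, the identification ought to be essentially tautological, since the $\bfmu$-binomial coefficients are engineered precisely to make this Cauchy-type formula hold.
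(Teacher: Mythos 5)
Your proposal is correct and follows essentially the same route as the paper: reduce to a scalar contour integral via the functional calculus, expand $\prod_i(\lambda-\mu_i t)^{-1}$ by partial fractions in the generic (pairwise distinct) case, apply the single-resolvent Cauchy formula term by term, match the resulting sum $\sum_i \mu_i^z\prod_{j\neq i}(\mu_i-\mu_j)^{-1}$ with the explicit formula for $\binom{z}{n}_{\!\bfmu}$, and extend to general $\bfmu$ by continuity (the paper uses holomorphicity in $\bfmu$). The only cosmetic difference is that you derive the partial-fraction expansion by iterating the resolvent identity, whereas the paper states it directly as Lemma \ref{lem:partial_fractions}; the outcome is identical.
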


\begin{proposition}
\label{prop:Delta_expansion}
Let $z \in \mathbb{C}$ and $Y \in \sD^m$. Then for any $n \in \mathbb{N}$ we have
\[
\Delta^z Y =  \sum_{k = 0}^n \sum_{\bfmu\in W(k)} \binom{z}{k}_{\!\!\bfmu} \nabla^\bfmu(Y) \Delta^{z - k} \; + \; Q,
\]
for some $Q \in \Op^{\Re(z) - n - 1}$.   The highest order term (\emph{i.e.} $k=0$) is equal to $\theta^{rz}(Y) \Delta^z$.
\end{proposition}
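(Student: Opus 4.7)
The plan is to derive the expansion from the resolvent identity of Proposition \ref{prop:expansion} by integrating against the Cauchy kernel
\[
\Delta^{z} = \frac{1}{2\pi i} \int_\Gamma \lambda^{z} R(1)\, d\lambda, \qquad \Re(z) < 0,
\]
and then to remove the restriction on $z$. First I fix $z$ with $\Re(z)$ sufficiently negative, multiply on the right by $Y$, substitute the expansion of Proposition \ref{prop:expansion} into the integrand, and exchange the finite sums with the contour integral. The main terms become
\[
\sum_{k = 0}^{n} \sum_{\bfmu \in W(k)} \nabla^{\bfmu}(Y) \cdot \frac{1}{2\pi i}\int_\Gamma \lambda^{z} R(\bfmu)\, d\lambda,
\]
and Lemma \ref{lem:mu-Cauchy-Delta} evaluates each inner integral to $\binom{z}{k}_{\!\!\bfmu} \Delta^{z - k}$, yielding the main sum of the statement.

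The bulk of the work will lie in controlling the remainder
\[
Q = \frac{1}{2\pi i} \int_\Gamma \lambda^{z} R(1) \sum_{\bfmu \in W(n)} \nabla(\nabla^{\bfmu}(Y)) R(\bfmu)\, d\lambda.
\]
Each factor $\nabla(\nabla^{\bfmu}(Y))$ lies in $\sD^{m + (n+1)(r-1)}$, since every application of $\nabla$ raises algebraic order by $r - 1$, and hence in the corresponding $\Op$-class by Lemma \ref{lem:analytic_order}. Combining this with standard Sobolev norm bounds for the resolvents $R(\mu)$ along a suitable vertical contour together with the growth $|\lambda^{z}| \leq C|\lambda|^{\Re(z)}$, an order count along $\Gamma$ will show that $Q$ lies in the claimed class $\Op^{\Re(z) - n - 1}$. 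This contour bookkeeping is the principal technical hurdle; it is purely analytic, with the diagonalizability hypothesis entering only through the algebraic identities, and it proceeds closely along the lines of the untwisted treatment in \cite{Uuye:PsiDOs}.

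To pass from $\Re(z) \ll 0$ to arbitrary $z \in \CC$, I would choose $N \in \NN$ with $\Re(z - N) \ll 0$, apply the formula just established to $z - N$, and multiply on the left by $\Delta^{N}$, commuting the factor $\Delta^{N}$ past the main terms inductively using the integer case; the reorganization of the series is governed by the Pascal-type identities for the generalized binomial coefficients $\binom{z}{k}_{\!\!\bfmu}$ proved in Appendix \ref{sec:mu-binomial}. Equivalently, both sides of the identity depend holomorphically on $z$, which permits extension by analytic continuation.

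Finally, for the leading ($k = 0$) term, $W(0) = W$ consists of single weights, $\nabla^{(\mu_0)}(Y) = Y^{\mu_0}$ is the $\theta^{r}$-spectral component of $Y$ of weight $\mu_0$, and the degenerate generalized binomial coefficient from Appendix \ref{sec:mu-binomial} evaluates to $\binom{z}{0}_{\!\!(\mu_0)} = \mu_0^{z}$. Summing over $\mu_0 \in W$ and using diagonalizability of $\theta^{r}$ with positive spectrum to define $\theta^{rz}$ eigenspace-wise as multiplication by $\mu_0^{z}$ on the $\mu_0$-eigenspace, one identifies this term with $\theta^{rz}(Y)\Delta^{z}$.
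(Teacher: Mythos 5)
Your proposal follows essentially the same route as the paper: Cauchy's integral formula plus the resolvent expansion of Proposition~\ref{prop:expansion}, evaluated via Lemma~\ref{lem:mu-Cauchy-Delta}, with the remainder controlled by the algebraic-order count $\nabla(\nabla^{\bfmu}(Y))\in\sD^{m+(n+1)(r-1)}$ and uniform operator-norm bounds along the contour, and finally the Pascal identity of Proposition~\ref{prop:mu-binomial} to reach general $z$. The only cosmetic difference is that the paper extends from $\Re(z)<0$ by a one-step induction $z\mapsto z+1$ via $\Delta Y=\theta^r(Y)\Delta+\nabla(Y)$ rather than multiplying by $\Delta^N$ all at once; your fallback appeal to analytic continuation is less convincing, since the claim is a membership in a $z$-dependent class $\Op^{\Re(z)-n-1}$ rather than an identity, but your primary argument via Pascal's identity is the one that matters and matches the paper.
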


\begin{proof}
To begin with, suppose $\Re(z)<0$.  Applying the Cauchy Integral formula, and using Proposition \ref{prop:expansion} and Lemma \ref{lem:mu-Cauchy-Delta}, we get
\begin{align}
 \Delta^z Y 
  &= \int_\Gamma \lambda^z R(1) Y \, dz  \nonumber \\
  &= \sum_{k=0}^n \sum_{\bfmu\in W(k)} \nabla^\bfmu(Y) \int_\Gamma \lambda^z R(\bfmu) \, dz 
   \nonumber \\
  & \qquad\qquad       + \!\! \sum_{\bfmu\in W(n)} \int_\Gamma \lambda^z R(1) \nabla(\nabla^\bfmu(Y)) R(\bfmu) \, dz
   \nonumber \\
  &= \sum_{k=0}^n \sum_{\bfmu\in W(k)} \binom{z}{k}_{\!\!\bfmu} \nabla^\bfmu(Y) \Delta^{z-k} 
   \nonumber \\
  & \qquad \qquad      + \!\! \sum_{\bfmu\in W(n)} \int_\Gamma \lambda^z R(1) \nabla(\nabla^\bfmu(Y)) R(\bfmu) \, dz  ,
    \label{eq:integral}
\end{align}
where $\Gamma$ is a vertical contour separating $0$ from the spectrum of $\mu_i\Delta$ for every $\mu_i$ appearing (non-trivially) in the formula.  By Proposition \ref{prop:mu-binomial}, the $k=0$ term in the sum equals
\[
  \sum_{\mu\in W} \mu^z Y^\mu \Delta^z = \theta^{rz}(Y)\Delta^z,
\]
as claimed.

For the remainder term, note that $\nabla(\nabla^\bfmu(Y)) \in \sD^{m+(n+1)(r-1)}$, so that $\nabla(\nabla^\bfmu(Y))R(\bfmu)$ belongs to $\Op^{m-n-1}$, and for any $s\in\RR$ its norm as an operator from $H^{s+m-n-1}$  to $H^s$ is uniformly bounded in $\lambda$.  Therefore, the integrals in the last line of \eqref{eq:integral} all converge uniformly in $\Op^{m-n-1}$.

This completes the proof when $\Re(z)<0$.  If the result holds for $z\in\CC$, then for $z+1$ we obtain
\begin{align*}
 \Delta^{z+1} Y &= \Delta^{z} \theta^r(Y) \Delta + \Delta^{z} \nabla(Y) \\
    &= \sum_{k=0}^n \sum_{\bfmu\in W(k)} \binom{z}{k}_{\!\!\bfmu} \nabla^\bfmu(\theta^r(Y)) \Delta^{z-k+1} \\
    & \qquad + \sum_{k=0}^n \sum_{\bfmu\in W(k)} \binom{z}{k}_{\!\!\bfmu} \nabla^\bfmu(\nabla(Y)) \Delta^{z-k}
     \;+\; Q \\
    &= \sum_{k=0}^n \sum_{\bfmu\in W(k)} \mu_0 \binom{z}{k}_{\!\!\bfmu}  \nabla^\bfmu(Y) \Delta^{z-k+1} \\
    & \qquad + \sum_{k=1}^{n+1} \sum_{\bfmu\in W(k)} \binom{z}{k-1}_{\!\!\check{\bfmu}} \nabla^\bfmu(Y) \Delta^{z-(k-1)}
     \;+\; Q,
\end{align*}
where $\check{\bfmu} = (\mu_1,\ldots,\mu_k)$ is the $k$-tuple obtained by removing $\mu_0$ and the remainder $Q$ is in $\Op^{\Re(z)-n}$.  By Pascal's Identity (Proposition \ref{prop:mu-binomial} (2)) we obtain
\[
  \Delta^{z+1} Y = \sum_{k = 0}^n \sum_{\bfmu\in W(k)} \binom{z+1}{k}_{\!\!\bfmu} \nabla^\bfmu(Y) \Delta^{z+1 - k} \;+\; Q',
\]
for some $Q' \in \Op^{\Re(z)-n}$.
An induction finishes the proof.
\end{proof}


\begin{definition}
 Let $P$ and $P_n$ ($n\in\NN$) be operators in $\Op$.  We say that $P$ admits the \emph{asymptotic expansion} $P\sim \sum_n P_n$ if, for every $l\in\RR$, there is $N\in\NN$ such that for all $n\geq\NN$,
 \[
  P - \sum_{k=0}^n P_k \in \Op^l.
 \]
\end{definition}

With this terminology, Proposition \ref{prop:Delta_expansion} can be rephrased as saying that $\Delta^zY$ admits the asymptotic expansion
\begin{equation}
\label{eq:Delta_asymptotic}
 \Delta^z Y \sim \theta^{r z}(Y) \Delta^z +  \sum_{k = 1}^\infty \sum_{\bfmu\in W(k)} \binom{z}{k}_{\!\!\bfmu} \nabla^\bfmu(Y) \Delta^{z - k}.
\end{equation}

\begin{lemma}
 The space $\Psi$ is an $\mathbb{R}$-filtered subalgebra of $\Op$ with $\Psi^0 \subseteq \Op^0$.
\end{lemma}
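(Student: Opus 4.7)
The plan is to verify three properties: $\Psi^t\subseteq\Op^t$ for every $t\in\RR$, monotonicity $\Psi^s\subseteq\Psi^t$ when $s\leq t$, and multiplicativity $\Psi^s\cdot\Psi^t\subseteq\Psi^{s+t}$. Stability under finite sums is built into Definition \ref{def:PsiDO}, and monotonicity is immediate since a decomposition witnessing order at most $s$ also witnesses order at most $t$. For the filtration inclusion, a basic PsiDO $P=X\Delta^{(z-m)/r}+Q$ has $X\in\sD^m\subseteq\Op^m$ by Lemma \ref{lem:analytic_order}, so $X\Delta^{(z-m)/r}\in\Op^{\Re(z)}\subseteq\Op^t$; choosing $l\leq t$ in the definition forces $Q\in\Op^l\subseteq\Op^t$, hence $P\in\Op^t$. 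In particular $\Psi^0\subseteq\Op^0$.

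The substance of the lemma is multiplicativity. By linearity it suffices to treat a product $P_1 P_2$ of two basic PsiDOs $P_i = X_i\Delta^{(z_i-m_i)/r}+Q_i$ with $X_i\in\sD^{m_i}$, $\Re(z_i)\leq t_i$, and $Q_i\in\Op^{l_i}$ (where $l_1,l_2$ may be chosen as small as we please). Expanding the product yields a ``principal'' term $X_1\Delta^{(z_1-m_1)/r}X_2\Delta^{(z_2-m_2)/r}$ together with three cross terms each involving some $Q_i$. Using $\sD^{m_i}\subseteq\Op^{m_i}$ and $\Delta^{(z_i-m_i)/r}\in\Op^{\Re(z_i)-m_i}$, each cross term has analytic order at most $l_i+\mathrm{const}$, so by shrinking the $l_i$ they can be absorbed into the final $\Op^l$-remainder.

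For the principal term, I apply Proposition \ref{prop:Delta_expansion} with exponent $w=(z_1-m_1)/r$ and $Y=X_2\in\sD^{m_2}$: for any $n$,
\[
 \Delta^w X_2 \;=\; \sum_{k=0}^{n}\sum_{\bfmu\in W(k)}\binom{w}{k}_{\!\!\bfmu}\nabla^{\bfmu}(X_2)\,\Delta^{w-k} \;+\; R_n,
\]
where the analytic order of $R_n$ tends to $-\infty$ with $n$. Left-multiplying by $X_1$ and right-multiplying by $\Delta^{(z_2-m_2)/r}$, each summand becomes $X'\Delta^{(z'-m')/r}$ where $X'\defeq X_1\nabla^{\bfmu}(X_2)\in\sD^{m'}$ with $m'\defeq m_1+m_2+k(r-1)$, and $z'\defeq z_1+z_2-k$ so that $\Re(z')\leq t_1+t_2$. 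Hence each such summand is a basic PsiDO of order at most $t_1+t_2$, while the leftover term $X_1 R_n \Delta^{(z_2-m_2)/r}$ lies in an $\Op$-class whose order tends to $-\infty$ with $n$.

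The main obstacle is the careful bookkeeping between algebraic and analytic orders---in particular the observation that $\nabla^{\bfmu}$ raises algebraic order by exactly $k(r-1)$ for $\bfmu\in W(k)$, which is what makes the new exponent $(z'-m')/r$ have real part at most $t_1+t_2$. Once this is accounted for, the total remainder from the expansion and the three cross terms can simultaneously be placed in $\Op^l$ for any prescribed $l$ by taking $n$ large and $l_1,l_2$ sufficiently negative. The displayed decomposition then exhibits $P_1 P_2\in\Psi^{t_1+t_2}$, completing the proof.
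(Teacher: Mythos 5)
Your proof is correct and follows essentially the same route as the paper: decompose the product into a principal term and cross terms, kill the cross terms by choosing the remainders $Q_i$ of sufficiently negative order, and expand the principal term by commuting $\Delta^{(z_1-m_1)/r}$ past $X_2$ via Proposition \ref{prop:Delta_expansion}, tracking the matched increase $k(r-1)$ in algebraic order and decrease $-k$ in the exponent. The only difference is presentational: you spell out the filtration inclusion $\Psi^t\subseteq\Op^t$ and the order-bookkeeping $m'=m_1+m_2+k(r-1)$, $z'=z_1+z_2-k$ a bit more explicitly than the paper, which states the asymptotic expansion \eqref{eq:Delta_asymptotic} and then declares the conclusion.
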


\begin{proof}
It is immediate from the definition that $\Psi^t \subseteq \Op^t$ for all $t\in\RR$.
We need to show that, given two basic pseudodifferential operators $P \in \Psi^t$ and $P^\prime \in \Psi^{t^\prime}$, their product $P P^\prime$ belongs to $\Psi^{t + t^\prime}$.   For any $\ell\in\RR$, we can write $P = X_m \Delta^{(z - m)/r} + Q$ and $P' = X'_{m'} \Delta^{(z'-m')/r}+Q'$, with $Q,Q'\in\Op^\ell$, as in Definition \ref{def:PsiDO}. The product takes the form
\begin{equation}
\label{eq:product}
P P^\prime = X_m \Delta^{(z - m)/r} X'_{m^\prime} \Delta^{(z^\prime - m^\prime)/r} + Q'',
\end{equation}
where
\begin{equation}
\label{eq:product_remainder}
 Q'' = X_m \Delta^{(z - m)/r} Q' + Q X_{m^\prime} \Delta^{(z^\prime - m^\prime)/r} + Q Q'.
\end{equation}
The three summands of the remainder term \eqref{eq:product_remainder} belong to $\Op^{t+\ell}$, $\Op^{t'+\ell}$ and $\Op^{2\ell}$, respectively, so by choosing $\ell$ sufficiently large and negative, we can ensure that the analytic order of $Q''$ is as large and negative as we want.

Having done this, the first term in \eqref{eq:product} admits an asymptotic expansion by Equation \eqref{eq:Delta_asymptotic}:
\[
 X_m \Delta^{(z - m)/r} X'_{m^\prime} \Delta^{(z^\prime - m^\prime)/r} 
  \sim \sum_{k=0}^\infty \sum_{\bfmu\in W(k)} {\binom{(z-m)/r}{k}_{\!\!\bfmu}} X_m \nabla^\bfmu(X'_{m'})
   \Delta^{\frac{z+z'-m-m'}{r} - k},
\]
where $X_m \nabla^\bfmu(X'_{m'}) \in \sD^{m+m'+k(r-1)}$. 
It is therefore a pseudodifferential operator of order at most $t+t'$.  This completes the proof.
\end{proof}

Finally, we need to equip $\Psi$ with a twisting---\emph{i.e.}, a one-parameter family of automorphisms $(\Theta^z)_{z\in\CC}$---making it into a twisted algebra of $\PsiDO$s.

\begin{proposition}
\label{prop:gdo-to-gpdo}
The one-parameter family of automorphisms $\Theta^z \defeq \Delta^{\frac{z}{r}} \slot \Delta^{-\frac{z}{r}}$ preserves the algebra $\Psi$, and make it into a twisted algebra of $\PsiDO$s compatible with $\sA$.
\end{proposition}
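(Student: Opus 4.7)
The plan is to verify the three axioms of Definition \ref{def:GPsiDOs} together with the compatibility condition of Definition \ref{def:compatible}(3). The one-parameter group property $\Theta^z\Theta^w = \Theta^{z+w}$ and the fact that each $\Theta^z$ is an algebra homomorphism are automatic from the definition as conjugation by the invertible operator $\Delta^{z/r}$; invertibility on $\Psi$ itself will follow once both $\Theta^z$ and $\Theta^{-z}$ are seen to preserve $\Psi$. The heart of the matter is therefore to show that $\Theta^z$ preserves the filtered algebra $\Psi$, for which the key tool is Proposition \ref{prop:Delta_expansion}.

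To see that $\Theta^z(\Psi^t)\subseteq\Psi^t$, take a basic pseudodifferential operator $P = X\Delta^{(w-m)/r}$ with $X\in\sD^m$ and $\Re(w)\leq t$. Using commutativity of the powers of $\Delta$ I would rewrite
\[
 \Theta^z(P) = \bigl(\Delta^{z/r}X\Delta^{-z/r}\bigr)\Delta^{(w-m)/r},
\]
and apply Proposition \ref{prop:Delta_expansion} to $\Delta^{z/r}X$, with $z/r$ playing the role of the proposition's $z$. The resulting asymptotic expansion is
\[
 \Theta^z(P) \;\sim\; \theta^z(X)\Delta^{(w-m)/r} + \sum_{k\geq1}\sum_{\bfmu\in W(k)}\binom{z/r}{k}_{\!\!\bfmu}\nabla^{\bfmu}(X)\Delta^{(w-m-kr)/r}.
\]
The $k$-th summand is a basic pseudodifferential operator of order at most $t-k$: recasting $\nabla^{\bfmu}(X)\Delta^{(w-m-kr)/r}$ into the standard form $X'\Delta^{(w'-m')/r}$ with $X' = \nabla^{\bfmu}(X)\in\sD^{m+k(r-1)}$ and $m' = m+k(r-1)$ forces $w' = w-k$, whose real part is $\leq t-k$. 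Truncating the asymptotic expansion at any level therefore places $\Theta^z(P)$ in $\Psi^t$ modulo an $\Op$-remainder of arbitrarily small order.

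The three axioms of Definition \ref{def:GPsiDOs} are then quick to check. For axiom (1), $\Delta^{z}\Psi\subseteq\Psi$ follows by a direct application of Proposition \ref{prop:Delta_expansion}, while $\Psi\Delta^{z}\subseteq\Psi$ is immediate since $X\Delta^{(w-m)/r}\Delta^{z}$ rewrites as $X\Delta^{(w+rz-m)/r}$. Axiom (2) is trivial: by construction $\Theta^z(P)\Delta^{z/r} = \Delta^{z/r}P$, whence $[\Delta^{z/r},\Psi]_{\Theta^z}=0$, so $\Delta$ is in fact $\Theta$-central. Axiom (3), $\Psi^0\subseteq\Op^0$, was established in the preceding lemma.

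For compatibility with $\sA$, I would specialize the expansion above to $P = a\in\sA\subseteq\sD^0$ with $w = m = 0$ and $z$ replaced by $1$, obtaining
\[
 \Theta(a) - \theta(a) \;\sim\; \sum_{k\geq1}\sum_{\bfmu\in W(k)}\binom{1/r}{k}_{\!\!\bfmu}\nabla^{\bfmu}(a)\Delta^{-k}.
\]
Each summand is a basic pseudodifferential operator of order at most $-k\leq -1$, so $\Theta(a) - \theta(a) \in \Psi^{-1}$, as required. No real obstacle arises in this proof; the entire argument is bookkeeping on filtration degrees, relying on the facts that $\nabla^{\bfmu}(X)\in\sD^{m+k(r-1)}$ and that $\Delta^{-k}$ has analytic order $-kr$.
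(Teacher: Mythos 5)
Your proof is correct and follows essentially the same route as the paper's: verify the axioms of Definition \ref{def:GPsiDOs} using the asymptotic expansion \eqref{eq:Delta_asymptotic} of $\Delta^{z/r}X$ from Proposition \ref{prop:Delta_expansion}, observe that $\Theta$-centrality is built into the definition of $\Theta^z$ as conjugation, and check the compatibility condition $\Theta(a)-\theta(a)\in\Psi^{-1}$ from the $k\geq 1$ tail of the expansion. You spell out the order bookkeeping (identifying $w'=w-k$, $m'=m+k(r-1)$) in more detail than the paper, which just cites the expansion, but the content is the same.
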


\begin{proof}
The asymptotic expansion \eqref{eq:Delta_asymptotic} shows that $\Psi$ is preserved by $\Theta^z$ for every $z\in\CC$.   It also shows that $\Delta^{\frac{z}{r}}\Psi^t \subseteq \Psi^{\Re(z)+t}$, and clearly $\Psi^t\Delta^{\frac{z}{r}}\subseteq \Psi^{\Re(z)+t}$.  Moreover, $\Delta$ is $\Theta$-central, meaning $[\Delta^\frac{z}{r},P]_{\Theta^{z}} = 0$ for all $P\in \Psi$.  Finally, if $a\in\sA$ then 
\[
  \Theta(a)  - \theta(a) =  (\Delta^\frac{1}{r} a - \theta(a) \Delta^{\frac{1}{r}})\Delta^{-\frac{1}{r}}.
\]
By Proposition \ref{prop:Delta_expansion}, $\Delta^\frac{1}{r} a - \theta(a) \Delta^\frac{1}{r} \in \Psi^{0}$, so $\Theta(a) - \theta(a) \in \Psi^{-1}$.  This completes the proof.
\end{proof}


\section{Regularity of spectral triples}
\label{sec:regular}

Let $(\sA,H,D)$ be a twisted spectral triple (see Definition \ref{def:twisted_spectral_triple}) with twisting $\theta$.  We consider the abstract Laplace operator
\[
 \Delta := D^2+1
\]
with order $r=2$.   It is immediate from the definitions that $(\sA,H,D)$ is regular if and only if it admits a twisted algebra of $\PsiDOo$s which is compatible with $\sA$ and also contains $[D,\sA]_{\theta}$ in order $0$.  We immediately obtain the following.

\begin{theorem}
 Let $(\sA,H,D)$ be a  twisted spectral triple and put $\Delta = D^2+1$.  
 \begin{enumerate}
  \item If $\sA$ admits a compatible twisted algebra of $\DO$s $\sD$ which contains $[D,\sA]_\theta$ in order $0$ and such that $\theta$ is diagonalizable on $\sD$, then $(\sA,H,D)$ is regular.  Moreover, $\sA$ admits a compatible twisted algebra of $\PsiDO$s.
  \item Conversely, if $(\sA,H,D)$ is regular, then it admits a compatible twisted algebra of $\DO$s which also contains $[D,\sA]_\theta$ in order $0$.
 \end{enumerate}
\end{theorem}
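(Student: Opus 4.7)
The plan is to deduce both directions from the Equivalence Theorem of Section \ref{sec:equivalence} via the reformulation asserted immediately before the statement---namely, that $(\sA,H,D)$ is regular if and only if $\sA$ admits a compatible twisted algebra of $\PsiDOo$s which also contains $[D,\sA]_\theta$ in order $0$.  Granting this reformulation, it suffices to track the containment $[D,\sA]_\theta \subseteq \sB$ (respectively $\sD^0$) through the relevant constructions of Section \ref{sec:equivalence}.

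For part (1), I apply part (3) of the Equivalence Theorem to the given $\sD$ to produce a compatible twisted algebra of $\PsiDO$s $\Psi$.  By Definition \ref{def:PsiDO}, each $X \in \sD^0$ is the basic pseudodifferential operator $X\cdot\Delta^0$, so $[D,\sA]_\theta \subseteq \sD^0 \subseteq \Psi^0$.  Applying part (1) of the Equivalence Theorem then yields the compatible $\PsiDOo$ algebra $\sB \defeq \Psi^0$ (with a modified twisting extending $\theta$), which contains $[D,\sA]_\theta$ and thus witnesses regularity; the ``moreover'' clause is $\Psi$ itself.  For part (2), the reformulation gives a compatible $\PsiDOo$ algebra $\sB$ containing $[D,\sA]_\theta$; applying part (2) of the Equivalence Theorem---whose proof constructs $\sD^m \defeq \sum_{k=0}^m \sB\Delta^{k/r}$, after first enlarging $\sB$ to include $\Delta^{-1/r}$---produces the required compatible twisted algebra of $\DO$s, with $[D,\sA]_\theta \subseteq \sB \subseteq \sD^0$ by construction.

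The main subtlety is the reformulation of regularity itself, since Definition \ref{def:regular} uses $\delta_\theta = [|D|,\slot]_\theta$ whereas the $\PsiDOo$ framework uses $[\Delta^{1/2},\slot]_\theta$ (with $r=2$).  The two twisted derivations differ by the map $b \mapsto Bb - \theta(b)B$, where $B \defeq (D^2+1)^{1/2} - |D|$ is bounded by elementary spectral calculus applied to $t \mapsto \sqrt{t^2+1} - |t|$.  One therefore passes between the two stability conditions by enlarging the witnessing algebra to absorb these bounded correction terms---for instance, by replacing it with the subalgebra of $\LL(H)$ it generates together with $B$---without affecting compatibility with $\sA$ or the containment of $[D,\sA]_\theta$ in order $0$.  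This is the expected main obstacle, and it is essentially routine.
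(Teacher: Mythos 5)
Your proof follows the same route the paper intends: read off the theorem from the reformulation of regularity in terms of twisted algebras of $\PsiDOo$s, and then push the containment $[D,\sA]_\theta$ through the constructions of the Equivalence Theorem (parts (3) then (1) for the forward direction; part (2) for the converse). That tracking is carried out correctly, and your observation that $\sD^0 \subseteq \Psi^0$ via $X = X\Delta^0$ and that $\sB \subseteq \sD^0$ in the part (2) construction is exactly what is needed.

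You also do something the paper does not: you flag that Definition \ref{def:regular} uses $\delta_\theta = [|D|,\slot]_\theta$ while the $\PsiDOo$ framework is built on $[\Delta^{1/2},\slot]_\theta$, and these differ by the twisted commutator with the bounded (indeed $\Op^{-1}$) operator $B = (D^2+1)^{1/2} - |D|$. The paper asserts the reformulation is ``immediate from the definitions,'' but this discrepancy is a genuine point which you are right to surface. Your proposed fix---enlarging $\sB$ to an algebra generated by $\sB$ and $B$---goes in the right direction, but I would not call it ``essentially routine'' without more care. Because $\theta$ is only required to be a linear isomorphism rather than an algebra automorphism, the twisted derivation $\delta_\theta$ does not satisfy the expected Leibniz rule on products, so one cannot immediately propagate closure under $\delta_\theta$ (or $\delta_{\theta'}$ with $|D|$ in place of $\Delta^{1/2}$) from generators to the full generated algebra. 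You would need to spell out how $\theta$ is extended to words in $\sB$ and $B$ and verify closure directly (for instance by exploiting that $B$ commutes with $\Delta^{1/2}$ and with $|D|$, and by reducing to the $\Theta = \Delta^{1/2}\slot\Delta^{-1/2}$-central picture as in Lemma \ref{lem:central}). This is doable but is a real lemma, not a triviality.

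One further small point worth flagging: part (1) of the theorem states only that $\theta$ is ``diagonalizable,'' whereas part (3) of the Equivalence Theorem, which you invoke, requires diagonalizability \emph{with positive spectrum} (needed to define the complex powers $\theta^z$). Your argument implicitly uses the stronger hypothesis; this is a minor imprecision inherited from the theorem statement itself, but it should be noted when applying the Equivalence Theorem.
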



\section{Examples from quantum groups}
\label{sec:Hopf}

Let $\sU$ be a unital Hopf algebra and $\sA$ a left $\sU$-module algebra with action denoted by $\triangleright$.  We can then form the cross-product algebra $\sA \# \sU$, which is the algebra generated by $\sA$ and $\sU$ with relations $h a = (h_{(1)} \triangleright a) h_{(2)}$ for $a \in \sA$ and $h \in \sU$. 

\begin{example}
 Let $G$ be a compact Lie group.  Let $\sU=\sU(\lie{g})$ be the universal enveloping algebra of its Lie algebra and let $\sA = C^\infty(G)$.  Then $\sA\#\sU$ is the algebra of differential operators on $G$.
\end{example}

\begin{example}
 Let $K$ be a compact semisimple Lie group, and let $\lie{g}=\lie{k}_\CC$ be the complexified Lie algebra.
 Put $\sU = \sU_q(\lie{g})$ and let $\sA = \sO(K_q)$.  Then $\sA\#\sU$ is an analogue of the algebra of polynomial differential operators on the quantum group $K_q$.
\end{example}

This second example is inspirational, but is somewhat too complicated for the simple framework we will describe here.  Still, our framework does apply to certain quantum homogeneous spaces, as we will indicate shortly. 

\medskip
Now suppose we have a bialgebra filtration on $\sU$ (that is, both an algebra and coalgebra filtration). We extend it to an algebra filtration on the crossed-product $\sA \# \sU$ by putting $\sA$ in degree $0$.

Let $C \in \sU$ be a central element of order $r$. In general, we have
\[
\boldsymbol{\Delta}(C) = \sum_{j = 0}^r c_{j} \otimes c'_{r - j} \in \sum_{j = 0}^r \sU^j \otimes \sU^{r - j}.
\]
(We are using boldface $\boldsymbol{\Delta}$ for the coproduct in an attempt to distinguish it from the abstract Laplace operator of the preceding sections.)
Suppose that the $j = 0$ term takes the special form $K^r \otimes C$ for some $K \in \sU^0$, \emph{i.e.}
\begin{equation}
\label{eq:C}
\boldsymbol{\Delta}(C) = K^r \otimes C + \sum_{j = 1}^r c_j \otimes c'_{r - j}.
\end{equation}
Define a linear map $\theta : \sA \# \sU \to \sA \# \sU$ by putting
\begin{equation}
\label{eq:K-twist}
 \theta(a h) = (K \triangleright a) h,
\end{equation}
and extending linearly.

\begin{proposition}
\label{prop:Hopf}
Let $\sD = \sA\#\sU$ and $C\in\sU$ be as above. Then $[C, \sD^m]_{\theta^r} \subseteq \sD^{m+r-1}$ for all $m\in\NN$.
\end{proposition}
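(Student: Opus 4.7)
The plan is to reduce the claim to elements of the form $X = ah$ with $a \in \sA$ and $h \in \sU^m$, since by the definition of the filtration we have $\sD^m = \sA \cdot \sU^m$. For such an $X$, the computation of $CX$ proceeds in two steps: first push $C$ past $a$ using the cross-relation $Ca = (C_{(1)} \triangleright a) C_{(2)}$, then use centrality of $C$ in $\sU$ to commute it past $h$. Applying the assumed form \eqref{eq:C} of $\boldsymbol{\Delta}(C)$ gives
\[
Ca = (K^r \triangleright a) C + \sum_{j=1}^r (c_j \triangleright a) c'_{r-j},
\]
so multiplying by $h$ on the right and using $Ch = hC$ yields
\[
CX = (K^r \triangleright a)\, h C \;+\; \sum_{j=1}^r (c_j \triangleright a)\, c'_{r-j}\, h.
\]

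The key observation is that the twist $\theta$ in \eqref{eq:K-twist} is designed precisely so that iterating the definition produces $\theta^r(ah) = (K^r \triangleright a)\, h$, using that $\triangleright$ is a left module action of $\sU$ on $\sA$. Hence the first summand above is exactly $\theta^r(X)\, C$, and subtracting gives
\[
[C, X]_{\theta^r} = \sum_{j=1}^r (c_j \triangleright a)\, c'_{r-j}\, h.
\]
To finish, one verifies the filtration bound summand by summand: $c_j \triangleright a \in \sA \subseteq \sD^0$, while $c'_{r-j}\, h \in \sU^{r-j} \cdot \sU^m \subseteq \sU^{r-j+m}$ by the algebra filtration on $\sU$, so each term lies in $\sD^{r-j+m}$. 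Since $j \geq 1$, this is contained in $\sD^{m+r-1}$, as required.

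The argument is essentially mechanical, so I do not expect a serious obstacle. The only conceptual input is the recognition that the twist $\theta^r$ is engineered to absorb precisely the ``top'' tensor factor $K^r \otimes C$ in $\boldsymbol{\Delta}(C)$; the remaining tensor factors $c_j \otimes c'_{r-j}$ with $j\geq 1$ contribute by construction to lower order in the filtration, which is exactly what produces the required reduction in algebraic order by one.
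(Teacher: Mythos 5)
Your proof is correct and follows essentially the same route as the paper: reduce to $X = ah$, push $C$ past $a$ via the cross-relation and the assumed form of $\boldsymbol{\Delta}(C)$, cancel the top term against $\theta^r(ah)C$ using centrality of $C$ in $\sU$, and read off the filtration degree of the remaining sum. The only cosmetic difference is the order in which you organize the cancellation, which does not change the substance.
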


\begin{proof}
It suffices to check this on elements of the form $a h$ with $a \in \sA$ and $h \in \sU^m$. We compute
\[
\begin{split}
[C, a h]_{\theta^r} & = C a h - \theta^r(a h) C \\
& = \sum_{j = 0}^r (c_j \triangleright a) c'_{r - j} h - (K^r \triangleright a) h C.
\end{split}
\]
Using the assumption on the coproduct of $C$ and its centrality in $\sU$ we get
\[
\begin{split}
[C, a h]_{\theta^r} & = (K^r \triangleright a) C h + \sum_{j = 1}^r (c_j \triangleright a) c'_{r - j} h - (K^r \triangleright a) C h \\
& = \sum_{j = 1}^r (c_j \triangleright a) c'_{r - j} h.
\end{split}
\]
Since $c'_{r - j} h \in \sU^{r - j} \cdot \sU^m \subseteq \sU^{m + r - j}$ we conclude that $[C, a h]_{\theta^r} \in \mathcal{D}^{m + r - 1}$.
\end{proof}

This proposition shows that, assuming we can prove the elliptic estimates, $\sD$ will be a twisted algebra of $\DO$s with abstract Laplace operator $\sC$.  With some modification, this framework allows one to prove regularity for Kr\"ahmer's Dirac operators on the quantum projective spaces $\CP_q^n$.  But the proof requires specialized techniques from quantum groups which are rather different from the ideas presented here, so we will leave it for a separate paper.  Instead, for the reader well-versed in quantum groups, we will provide here a very brief summary of the simplest example, namely the Podle\'{s} sphere.  Compare the results of Neshveyev and Tuset \cite{NesTus:local_index_formula}.

\begin{example}
We follow the conventions of \cite{NesTus:local_index_formula} (with only slight modifications to the notation).  In particular, the generators $E,F,K$ of $\Uqk \defeq \Uq(\lie{su}_2)$, satisfy
 \begin{align*}
  & KEK^{-1} = qE, && KFK^{-1} = q^{-1}F, && [E,F] = \frac{K^2-K^{-2}}{q-q^{-1}},\\
  & \boldsymbol{\Delta}E = K \otimes E  + E \otimes K^{-1} , 
  &&  \boldsymbol{\Delta}F = K \otimes F + F \otimes K^{-1} , && \boldsymbol{\Delta}K = K \otimes K.\\
  & E^* = F, && F^* = E, && K^*=K.
 \end{align*}
 Let $\Uqt$ denote the abelian subalgebra of $\Uqk$ generated by $K$.
 
We write $\sO(K_q)$ for the algebra of polynomial functions on the compact quantum group $K_q=\SU_q(2)$.  It is a left $\Uqk$-module algebra with
 \[
  X \hit a \defeq (X,a_{(2)})a_{(1)}, \qquad X\in\Uqk,~a\in\sO(K_q).
 \]

 The Podle\'{s} sphere is the quantum homogeneous space $S^2_q = \CP^1_q = K_q/T$.  For each $k\in\half\ZZ$ we define\footnote{We are using half-integer spins, where \cite{NesTus:local_index_formula} uses integer spins.}
 \[
  \sO(\sE_{k}) := \{ \xi \in \sO(K_q) \st K\hit \xi = q^k\xi \},
 \]
 which is declared to be the section space of the spin $k$ bundle over $S^2_q$.  In particular $\sO(S^2_q) \defeq \sO(\sE_0)$.   The spinor bundle $\mathbb{S}$ is defined by $\sO(\mathbb{S}) \defeq \sO(\sE_{\half}) \oplus \sO(\sE_{-\half})$.  It is equipped with a Dirac operator:
 \[
  \Dirac \defeq \begin{pmatrix} 0 & E \\ F & 0 \end{pmatrix}.
 \]
 
 Dabrowski-Sitarz \cite{DabSit} proved that $(\sO(S^2_q), L^2(\mathbb{S}), \Dirac)$ is a spectral triple (not twisted), where $L^2(\mathbb{S})$ refers to the $L^2$-completion of $\sO(\mathbb{S})$ with respect to the Haar state of $\SU_q(2)$.  On the other hand, it is not regular as an untwisted spectral triple.  But it is regular as a \emph{twisted} spectral triple, even though the pertinent twisting is trivial on $\sA = \sO(S^2_q)$.  To prove this, we introduce a twisted algebra of differential operators.

 As described above, the algebra $\DO(K_q)\defeq\sO(K_q)\#\Uqk$ plays the role of the polynomial differential operators on $K_q$. The algebra of differential operators $\sD \defeq \DO(S^2_q;\mathbb{S})$ on $\mathbb{S}$ is the subalgebra of $2\times2$-matrices $X=(X_{ij})$ over $\DO(K_q)$ satisfying
 \[
  \begin{pmatrix}
   K&0\\0&K
  \end{pmatrix}
  \begin{pmatrix}
   X_{11}&X_{12}\\X_{21}&X_{22}
  \end{pmatrix} 
  \begin{pmatrix}
   K^{-1}&0\\0&K^{-1}
  \end{pmatrix}
  =
  \begin{pmatrix}
   X_{11}&qX_{12}\\q^{-1}X_{21}&X_{22}
  \end{pmatrix}.
 \]
 We can filter $\Uqk$ by declaring the generators $E,F$ to be order $1$, while $K,K^{-1}$ are order $0$.  This extends to a filtration of $\DO(K_q)$ where the functions $\sO(K_q)$ have order $0$, and thus to a filtration on $\sD$.

 The Casimir element for $\Uqk$ is
 \[
  C = EF + \left( \frac{q^{-\half}K^2 + q^\half K^{-2}}{q-q^{-1}}\right)^2,
 \]
which is an order two element, equal to $EF$ (or $FE$) modulo lower order.  Moreover, since
 \[
  \boldsymbol{\Delta}(EF) = K^2\otimes EF + KF\otimes EK^{-1} + EK\otimes K^{-1}F + EF \otimes K^{-2}
 \]
 we see that $\boldsymbol{\Delta}C \equiv K^2\otimes C$ modulo elements of order at most $1$ in the second leg.  In other words, $C$ satisfies Equation \eqref{eq:C}.  If we let $C$ act diagonally on sections of the spinor bundle $\sO(\mathbb{S})$, then Proposition \ref{prop:Hopf} shows that $[C,\sD^m]_{\theta^2} \subset \sD^{m+1}$ where $\theta$ is the twisting given by
 \[
  \theta(aX) = (K \hit a)X, \qquad (a\in\sO(K_q),~X\in\Uqk),
 \]
 which we extend entry-wise to $2\times2$-matrices.

 The Dirac operator $\Dirac$ satisfies $\Dirac^2=C$ as operators on $L^2(\mathbb{S})$.
 The elliptic estimates for $\sD$ with respect to $\Delta \defeq 1+\Dirac^2$ can be readily checked.  As a result, $(\sO(S^2_q), L^2(\mathbb{S}), \Dirac)$ is a regular (twisted) spectral triple.

Note that the twisting $\theta$ is only twisting the coefficient function $a$, not the constant coefficient differential operator $X$.  We are in the situation where the twisting $\theta$ is not an algebra automorphism of $\sD$, but only an algebra automorphism at the level of principal symbols. 
\end{example}


\section{Zeta functions}
\label{sec:zeta-functions}

We will conclude with some comments on the residues of zeta functions in our context, since this is one of the first major consequences of the abstract pseudodifferential calculus.  

Classically, one considers zeta functions of the form $\zeta_X(z) = \Tr(X\Delta^{-\frac{z}{r}})$, where $X\in\sD$ is an abstract differential operator and $\Tr$ is the operator trace, which is well-defined for $\Re(z)$ sufficiently large.  
Typically, the functions $\zeta_X$ extend to meromorphic functions of $z$.  But recall that for certain results one needs the additional condition that all poles of $\zeta_X$ be simple---this is the \emph{simple dimension spectrum} condition (see \cite{ConMos:local_index_formula, Higson:local_index_formula}).  For instance, this condition is needed to prove that the \emph{residue trace}
$ \tau(X) \defeq \Res_{z=0} \zeta_X(z)$ is a trace.  

The spectral triples associated to quantum homogeneous spaces, however, do not generally have simple dimension spectrum.  One potential solution is to introduce an modular operator $\rho$ (closed, unbounded, with core $H^\infty$) into the definition of the zeta function:
\begin{equation}
\label{eq:zeta}
 \zeta_X(z) \defeq \Tr(\rho X\Delta^{-\frac{z}{r}}).
\end{equation}
An example of this phenomenon is given by the Podle\'s sphere, where the introduction of an appropriate $\rho \neq 1$ gives simple poles for the zeta function, while the case $\rho = 1$ gives double poles, see \cite[Lemma 1]{KraWag}.

We will require that the twisted algebra of $\DO$s $\sD$ be stable under conjugation by $\rho$, and denote the resulting algebra automorphism by $\sigma$:
\[
 \sigma(X) \defeq \rho X \rho^{-1}.
\]
In this case, as we shall now show, the residue trace will be a twisted trace: $\tau(XY) = \tau(Y\sigma(X))$, where $\sigma(X) = \rho X \rho^{-1}$.  

Note, though, that the twisting $\sigma$ here is not necessarily related to the twisting $\theta$ used above.  For instance, the following proposition holds for the standard residue trace (with $\rho=1$) of a twisted algebra $\sD$ (with $\theta\neq\id$), provided that the standard zeta function does indeed extend meromorphically with only simple poles.

\begin{proposition}
Let $\mathcal{D}$ be a twisted algebra of $\DO$s, with diagonalizable twisting $\theta$. Suppose that for all $X \in \sD$ we have:
\begin{itemize}
\item $\rho X \Delta^{-\frac{z}{r}}$ is trace-class for all $z \in \mathbb{C}$ with sufficiently large real part,
\item $\zeta_X(z)$ extends to a meromorphic function on $\mathbb{C}$ with only simple poles.
\end{itemize}
Then the functional $\Phi: \mathcal{D} \to \mathbb{C}$ defined by
\[
\Phi(X) = \mathrm{Res}_{z = 0} \, \zeta_X(z)
\]
is a $\sigma$-twisted trace, that is $\Phi(X Y) = \Phi(Y \sigma(X))$ for all $X, Y \in \sD$.
\end{proposition}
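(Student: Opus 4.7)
The plan is to show that $\zeta_{XY}(z) - \zeta_{Y\sigma(X)}(z)$ is regular at $z = 0$, which is equivalent to the twisted trace identity. First I would use cyclicity of the operator trace to rewrite this difference as the trace of a commutator. For $\Re(z)$ large enough that all operators involved are trace-class, cyclicity together with the relation $\rho X = \sigma(X)\rho$ give
\begin{equation*}
 \zeta_{XY}(z) = \Tr(\rho X Y \Delta^{-z/r}) = \Tr(Y \Delta^{-z/r} \rho X) = \Tr(\rho Y \Delta^{-z/r} \sigma(X)),
\end{equation*}
where the last equality uses $\rho X = \sigma(X) \rho$ and cycles $\rho$ back to the front. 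Subtracting $\zeta_{Y\sigma(X)}(z) = \Tr(\rho Y \sigma(X) \Delta^{-z/r})$ yields $\zeta_{XY}(z) - \zeta_{Y\sigma(X)}(z) = \Tr\bigl(\rho Y \,[\Delta^{-z/r}, \sigma(X)]\bigr)$.

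Next, I would apply Proposition \ref{prop:Delta_expansion} (with exponent $-z/r$) to expand $\Delta^{-z/r}\sigma(X)$ asymptotically. For any $n \in \NN$ this gives
\begin{equation*}
 [\Delta^{-z/r}, \sigma(X)] = \bigl(\theta^{-z}(\sigma(X)) - \sigma(X)\bigr) \Delta^{-z/r} + \sum_{k=1}^n \sum_{\bfmu \in W(k)} \binom{-z/r}{k}_{\!\!\bfmu} \nabla^\bfmu(\sigma(X))\, \Delta^{-z/r - k} + Q_n(z),
\end{equation*}
with $Q_n(z) \in \Op^{-\Re(z)/r - n - 1}$. The key observation is that every scalar coefficient has a zero at $z = 0$: using diagonalizability, decompose $\sigma(X) = \sum_\mu (\sigma(X))^\mu$ into finitely many weight components, so the first term becomes $\sum_\mu(\mu^{-z/r} - 1)(\sigma(X))^\mu \Delta^{-z/r}$, where each $\mu^{-z/r} - 1$ is entire in $z$ with a simple zero at $z = 0$; likewise the generalized binomial coefficients $\binom{-z/r}{k}_{\!\bfmu}$ for $k \geq 1$ vanish at $z = 0$, a property inherited from the classical case and developed in Appendix \ref{sec:mu-binomial}.

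Multiplying by $\rho Y$ and taking traces term by term, each piece of the expansion becomes the product of a scalar function vanishing at $z = 0$ with a zeta function of the form $\zeta_{Y(\sigma(X))^\mu}(z)$ or $\zeta_{Y\nabla^\bfmu(\sigma(X))}(z + kr)$ for an element of $\sD$. By hypothesis every such zeta function has at worst a simple pole at $z = 0$, so these products are all regular there and contribute no residue. For the remainder, choosing $n$ large enough makes the analytic order of $Q_n(z)$ sufficiently negative that $\rho Y Q_n(z)$ is trace-class with trace analytic in a neighborhood of $z = 0$, so it too contributes nothing. The twisted trace identity $\Phi(XY) = \Phi(Y\sigma(X))$ follows at once.

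The main technical obstacle I anticipate is controlling the $z$-dependence of $Q_n(z)$ uniformly enough in a neighborhood of $0$ to conclude that $\Tr(\rho Y Q_n(z))$ is actually holomorphic there rather than merely finite at the single point $z = 0$; this rests on the $\Op$-norm estimates implicit in the proof of Proposition \ref{prop:Delta_expansion}. A secondary subtlety is the justification of trace cyclicity in the presence of the unbounded modular operator $\rho$, which should be routine provided one works on the dense core $H^\infty$ where every operator in sight makes sense.
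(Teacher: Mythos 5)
Your proposal is correct and follows essentially the same route as the paper: cyclicity plus $\rho X = \sigma(X)\rho$ to move $\sigma(X)$ to the right of $\Delta^{-z/r}$, the asymptotic expansion of Proposition \ref{prop:Delta_expansion}, vanishing of the $\bfmu$-binomial coefficients at $z=0$, decomposition into weight components to handle the $k=0$ term, and the simple-pole hypothesis to kill the residues of the lower-order terms. The only cosmetic difference is that you package the argument as showing $\zeta_{XY}-\zeta_{Y\sigma(X)}=\Tr(\rho Y[\Delta^{-z/r},\sigma(X)])$ is regular at $0$, whereas the paper computes $\mathrm{Res}_{z=0}\zeta_{XY}$ directly and then strips off the $\theta^{-z}$ twist at the end; these are interchangeable presentations of the same calculation.
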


\begin{proof}
Using $\sigma(X) = \rho X \rho^{-1}$ and the trace property we write
\[
\zeta_{X Y}(z) = \mathrm{Tr}(\rho X Y \Delta^{-\frac{z}{r}}) = \mathrm{Tr}(\rho Y \Delta^{-\frac{z}{r}} \sigma(X)).
\]
We can apply the expansion of Proposition \ref{prop:Delta_expansion} to $\Delta^{-\frac{z}{r}} \sigma(X)$. Let us denote by $m$ the order of $\sigma(X) \in \sD$.
Then we get
\[
\begin{split}
\zeta_{X Y}(z) & = \mathrm{Tr}(\rho Y \theta^{-z}(\sigma(X)) \Delta^{-\frac{z}{r}}) \\
& + \sum_{k = 1}^n \sum_{\bfmu\in W(k)} \binom{-z/r}{k}_{\!\!\bfmu} \mathrm{Tr}(\rho Y \nabla^\bfmu(\sigma(X)) \Delta^{-\frac{z}{r} - k}) \\
& + \mathrm{Tr}(\rho Y Q),
\end{split}
\]
where $Q \in \mathrm{Op}^{m - n - 1}$.

Now we take the residue at $z = 0$ of the function $\zeta_{X Y}(z)$. The second line vanishes, since by assumption the zeta function has only simple poles and the coefficients $\binom{-z/r}{k}_{\!\bfmu}$ vanish at $z = 0$ for all $k>0$ (see Proposition \ref{prop:mu-binomial}).
Similarly the third line can be made to vanish by taking $n$ large enough, since $\mathrm{Tr}(\rho Y Q)$ becomes holomorphic at $0$ in this case.
Therefore we are left with
\[
\Phi(X Y) = \mathrm{Res}_{z = 0}\, \mathrm{Tr}(\rho Y \theta^{- z}(\sigma(X)) \Delta^{-\frac{z}{r}}).
\]

The twist $\theta^{-z}$ in this expression can be removed, as we now argue. Suppose to begin with that $X$ is homogeneous of weight $\lambda$, that is $\theta^{-z}(X) = \lambda^{-z} X$. The factor $\lambda^{-z}$ can be pulled out of the trace and evaluating it at $z = 0$ gives $1$. For general $X$, it suffices to decompose $\sigma(X)$ into homogeneous components.  

Therefore, we get
\[
\Phi(X Y) = \mathrm{Res}_{z = 0}\, \mathrm{Tr}(\rho Y \sigma(X) \Delta^{-\frac{z}{r}}) = \Phi(Y \sigma(X)).
\]
This concludes the proof.
\end{proof}

\bigskip

\appendix
\section{Appendix: Quantum analogues of the Cauchy Differentiation Formula}
\label{sec:mu-Cauchy}

\subsection{A generalized Cauchy Differentiation Formula}
Recall that the classical Cauchy Differentiation Formula is
\[
  \frac{1}{2\pi i} \int_\Gamma \frac{f(\lambda) }{(\lambda-t)^{n+1}}\,  d\lambda
    = \frac{f^{(n)}(t)}{n!}.
\]
To state the generalized formula, we need to introduce differential-difference operators of the following type.

\begin{definition}
 Let $\bfmu = (\mu_0,\ldots,\mu_n) \in \CC^{n+1}$.  We write $\mathrm{mult}(\mu_i)$ for the multiplicity of $\mu_i$ in $\bfmu$, \emph{i.e.}, 
 $
  \mathrm{mult}(\mu_i) = \# \{ j \st \mu_j=\mu_i \}.
 $
 
 If $f(s)$ is a holomorphic function of $s\in\CC$ (or some appropriate open subset), we write $p_{f, \bfmu}(s)$ for the polynomial of degree $n$ which agrees with $f(s)$ at each $s=\mu_i$ to order $\mathrm{mult}(\mu_i)$.  The \emph{$\bfmu$-derivative} of a holomorphic function $f$ at $t\in\CC$ is then defined as
 \[
  \dmu f(t) := p_{f,t\bfmu}^{(n)} \in \CC,
 \]
 where we note that the $n$th derivative $p_{f,t\bfmu}^{(n)}(s)$ is polynomial of order $0$.
\end{definition}

For instance, the ordinary $n$th derivative is equal to $\partial_{\bfmu}$ when $\bfmu = (1,\ldots,1) \in \CC^{n+1}$.  
On the other hand, if all the $\mu_i$ are distinct we obtain a higher order difference operator.  For instance, $\partial_{(a,b)} f (t) = \frac{f(at)-f(bt)}{a-b}$ when $a\neq b$.  In particular $\partial_{(1,q)}$ is the $q$-derivative $D_q$ when $q\neq 1$ (see \cite[\S2.2.1]{KliSch}).

\begin{proposition}
\label{prop:mu-Cauchy}
 Let $f$ be a holomorphic function on a simply connected domain $U\subseteq\CC$, and let $\Gamma$ be a simple contour in $U$.  Let $t\in\CC$ and let $\bfmu \in \CC^n$ such that $\mu_i t$ lies in the interior of $\Gamma$ for each $i$.  Then
 \begin{equation}
 \label{eq:mu-Cauchy}
  \frac{1}{2\pi i} \int_\Gamma f(\lambda) \,{\textstyle \prod_{i=0}^n (\lambda - \mu_it)^{-1} } \, d\lambda
    = \frac{\dmu f(t)}{n!}.
 \end{equation}
\end{proposition}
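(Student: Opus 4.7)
The plan is to reduce the formula to the classical Cauchy integral theorem by way of a Hermite-type interpolation, after which only a short residue-at-infinity calculation remains.

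First I would set $Q(\lambda) \defeq \prod_{i=0}^n (\lambda-\mu_i t)$, a monic polynomial of degree $n+1$ whose divisor of zeros is exactly $\sum_i (\mu_i t)$, \emph{i.e.}, with a zero of order $\mathrm{mult}(\mu_i)$ at each distinct node $\mu_i t$. By the defining property of $p_{f,t\bfmu}$, the difference $f - p_{f,t\bfmu}$ vanishes at each node $\mu_i t$ to order at least $\mathrm{mult}(\mu_i)$. Hence the \emph{a priori} meromorphic function $(f-p_{f,t\bfmu})/Q$ has only removable singularities at the nodes and extends to a function $g$ holomorphic on all of $U$. (Here I use that each $\mu_i t$ lies in the interior of $\Gamma$, hence in $U$ by simple connectedness.) This yields the Hermite decomposition $f(\lambda) = p_{f,t\bfmu}(\lambda) + g(\lambda) Q(\lambda)$ on $U$.

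Substituting into the left-hand side of \eqref{eq:mu-Cauchy} splits the integral as
\[
 \frac{1}{2\pi i}\int_\Gamma \frac{f(\lambda)}{Q(\lambda)}\, d\lambda
  = \frac{1}{2\pi i}\int_\Gamma \frac{p_{f,t\bfmu}(\lambda)}{Q(\lambda)}\, d\lambda \;+\; \frac{1}{2\pi i}\int_\Gamma g(\lambda)\, d\lambda.
\]
The second integral vanishes by Cauchy's theorem, since $g$ is holomorphic on the simply connected domain $U \supseteq \Gamma$. For the first, I would expand $p_{f,t\bfmu}(\lambda) = \sum_{k=0}^n a_k \lambda^k$ and compute each monomial's contribution by deforming $\Gamma$ to an arbitrarily large circle (legal since $\lambda^k/Q(\lambda)$ is holomorphic outside the enclosed poles). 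Because $Q$ is monic of degree $n+1$, one has $\lambda^n/Q(\lambda) = 1/\lambda + O(1/\lambda^2)$ at infinity and $\lambda^k/Q(\lambda) = O(1/\lambda^2)$ for $k<n$, yielding $\frac{1}{2\pi i}\int_\Gamma \lambda^n/Q(\lambda)\, d\lambda = 1$ and $\frac{1}{2\pi i}\int_\Gamma \lambda^k/Q(\lambda)\, d\lambda = 0$ for $k<n$. The integral therefore equals the leading coefficient $a_n$ of $p_{f,t\bfmu}$.

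To finish, since $p_{f,t\bfmu}$ has degree at most $n$, its $n$th derivative is the constant $n!\,a_n$. By the definition of $\dmu$, one has $\dmu f(t) = p_{f,t\bfmu}^{(n)} = n!\,a_n$, so $a_n = \dmu f(t)/n!$, matching the stated formula. The only step requiring genuine care is the first: verifying that $(f-p_{f,t\bfmu})/Q$ extends across the nodes, which is exactly the statement that Hermite interpolation matches $f$ to the prescribed order at each node. Everything that follows is standard residue calculus.
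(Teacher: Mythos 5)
Your proof is correct, and it takes a genuinely different route from the paper's. The paper's argument first reduces to the generic case where all the $\mu_i$ are distinct (by density of that stratum together with holomorphic dependence on $\bfmu$), applies a partial-fractions expansion of $\prod_i(\lambda-\mu_i t)^{-1}$, and evaluates both sides via the explicit Lagrange interpolation formula. Your argument instead makes the Hermite decomposition $f = p_{f,t\bfmu} + gQ$, uses Cauchy's theorem to kill the $g$-term, and extracts the leading coefficient of $p_{f,t\bfmu}$ by a residue-at-infinity computation. The advantage of your approach is that it handles repeated nodes directly---no density or continuity argument is needed, because Hermite interpolation is built to accommodate multiplicities. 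The price is that you invoke the fact that $(f-p_{f,t\bfmu})/Q$ extends holomorphically across the nodes, which is exactly the error-factorization property of Hermite interpolation; the paper's proof avoids this by sticking to the Lagrange case and filling in the confluent configurations by continuity. One small point worth spelling out explicitly: when deforming $\Gamma$ to a large circle to compute $\int_\Gamma \lambda^k/Q(\lambda)\,d\lambda$, you are deforming in $\CC$ rather than in $U$ (which may be bounded); this is harmless because $\lambda^k/Q$ is a rational function whose only poles are the nodes $\mu_i t$, all of which lie inside $\Gamma$, so the integral is unchanged. With that clarification, the argument is complete.
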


\begin{proof}

It suffices to prove this formula on the dense open subset of $\bfmu$ for which all coefficients $\mu_i$ are distinct, since both sides of the formula are holomorphic functions of $\bfmu\in\CC^{n+1}$ so long as $\mu_it$ lies inside $\Gamma$ for all $i$.  

\begin{lemma}
 \label{lem:partial_fractions}
 If $a_0,\ldots,a_n$ are distinct complex numbers then
 \[
   \prod_{i=0}^n (\lambda-a_i)^{-1} 
    = \sum_{i=0}^n \left( (\lambda-a_i)^{-1} \prod_{j\neq i} (a_i-a_j)^{-1} \right)
 \]
 as rational functions.
\end{lemma}

\begin{proof}
 Consider the following polynomial in $\lambda$:
  \[
   g(\lambda) \defeq \sum_{i=0}^n \left( \prod_{j\neq i} \frac{(\lambda-a_j)}{(a_i-a_j)} \right).
  \]
  It satisfies $g(a_k)=1$ for all $k=0,\ldots,n$.  Since $g$ has order $n$, it follows that $g=1$.  Multiplying $g$ by $\prod_{i=0}^n (\lambda-a_i)^{-1}$ gives the result.    
\end{proof}

Applying this lemma, the left-hand side of Equation \eqref{eq:mu-Cauchy} becomes
\begin{align}
  \frac{1}{2\pi i} \int_\Gamma f(\lambda) & \,{\textstyle \prod_{i=0}^n (\lambda - \mu_it)^{-1} } \, d\lambda 
   \nonumber \\
   &= \sum_{i=0}^n \left( {\textstyle \prod_{j\neq i} (\mu_it-\mu_jt)^{-1}}\right) 
     \frac{1}{2\pi i}  \int_\Gamma f(\lambda) (\lambda-\mu_i t)^{-1} \,d\lambda 
   \nonumber \\
   &= \sum_{i=0}^n f(\mu_it) \,{\textstyle \prod_{j\neq i} (\mu_it-\mu_jt)^{-1}}.
   \label{eq:explicit_Cauchy}
\end{align}

For the right-hand side, we recall that
\[
 \dmu f(t) = p_{f,t\bfmu}^{(n)}
\]
where $p_{f,t\bfmu}$ is the unique polynomial of degree $n$ which agrees with $f$ at each $\mu_i t$.  Specifically,
\[
 p_{f,t\bfmu}(s) = \sum_{i=0}^n f(\mu_i t) \prod_{j\neq i} \frac{(s-\mu_j t)}{(\mu_i t - \mu_j t)}.
\]
Considering the coefficient of $s^n$ in this polynomial gives
\begin{equation}
\label{eq:explicit_q-derivative}
 \frac{\dmu f(t)}{n!} = \frac{p_{f,t\bfmu}^{(n)}}{n!} =  \sum_{i=0}^n f(\mu_i t) \prod_{j\neq i} \frac{1}{(\mu_it-\mu_jt)}.
\end{equation}
Comparing \eqref{eq:explicit_Cauchy} and \eqref{eq:explicit_q-derivative} proves the proposition.
\end{proof}

\subsection{$\bfmu$-binomial coefficients}
\label{sec:mu-binomial}

We will apply the above formula to the functions $f(t) = t^z$ where $z\in\CC$ (with branch cut for $t$ on the negative real axis).  
Let $\bfmu \in \RR_+^{n+1}$ be an $n$-tuple of strictly positive reals.  Then the $\bfmu$-derivative of $t^z$ is well-defined for all $t\in\CC\setminus(-\infty,0]$, and from Equation \eqref{eq:explicit_q-derivative} we deduce that it is a constant multiple of $t^{z-n}$.  The constant will be called the \emph{$\mu$-binomial coefficient} and denoted $\binom{z}{n}_{\!\bfmu}$, \emph{i.e.},
\begin{equation}
 \label{eq:mu-binomial}
 \dmu (t^z) = \binom{z}{n}_{\!\!\bfmu} t^{z-n} .
\end{equation}

The next proposition summarizes some of the basic properties of the $\bfmu$-binomial coefficients.  

\begin{proposition}
 \label{prop:mu-binomial}
 Let $\bfmu = (\mu_0,\ldots,\mu_n)\in\RR_+^{n+1}$ and $z\in\CC$.  
 \begin{enumerate}
  \item The function $z \mapsto \binom{z}{n}_{\!\bfmu}$ is entire.
  \item The following analogue of Pascal's identity holds:
  \begin{equation}
   \label{eq:Pascal}
   \binom{z+1}{n}_{\!\!\bfmu} = \mu_0 \binom{z}{n}_{\!\!\bfmu} + \binom{z}{n-1}_{\!\!\check{\bfmu}},
  \end{equation}
  where $\check{\bfmu} = (\mu_1,\ldots,\mu_n)\in\RR_+^n$ is the $n$-tuple obtained by removing $\mu_0$.
  \item If $\bfmu = (1,\ldots,1)$ then $\binom{z}{n}_{\!\bfmu} = \frac{z(z-1)\cdots(z-n+1)}{n!}$ is the standard binomial coefficient.
  \item If $\bfmu = (1,q,\ldots,q^n)$ for $q\neq 1$, then $\binom{z}{n}_{\!\bfmu} = \frac{(1-q^z)(1-q^{z-1})\cdots(1-q^{z-n+1})}{(1-q)(1-q^2)\cdots(1-q^{n})}$ is the Gaussian binomial coefficient.
  \item If $n=0$, so that $\bfmu = (\mu)$ for some $\mu\in\RR_+$, then $\binom{z}{0}_{\!\bfmu} = \mu^z$.
  \item If $n>0$, then $\binom{0}{n}_{\!\bfmu}=0$.
 \end{enumerate}
\end{proposition}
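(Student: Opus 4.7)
The plan is to base everything on the integral representation obtained from Proposition~\ref{prop:mu-Cauchy} applied to $f(\lambda) = \lambda^z$ (principal branch), namely
\[
  \binom{z}{n}_{\!\!\bfmu} t^{z-n} \;=\; \frac{1}{2\pi i} \int_\Gamma \lambda^z \prod_{i=0}^n (\lambda - \mu_i t)^{-1}\, d\lambda,
\]
where $\Gamma \subset \CC \setminus (-\infty, 0]$ is any simple closed contour enclosing all the points $\mu_i t$. The six statements can then be dispatched in the order (1), (5), (6), (3), (2), (4), which exploits their natural recursive structure.

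Statement (1) is immediate from differentiation under the integral sign: the integrand is jointly continuous on $\CC \times \Gamma$ and holomorphic in $z$ for each fixed $\lambda$, so $z \mapsto \binom{z}{n}_{\!\bfmu}$ is entire. For (5), the $n=0$ integral reduces to the single residue at $\lambda = \mu t$ with value $(\mu t)^z$, whence $\binom{z}{0}_{\!(\mu)} = \mu^z$. For (6), at $z=0$ the integrand decays like $|\lambda|^{-(n+1)}$ with $n+1 \geq 2$, so deforming $\Gamma$ to a circle of radius $R \to \infty$ forces the integral to vanish. For (3), collapsing all factors to $(\lambda - t)^{-(n+1)}$ recovers the classical Cauchy differentiation formula, giving $\frac{1}{n!}\frac{d^n}{d\lambda^n}\lambda^z\big|_{\lambda = t} = \frac{z(z-1)\cdots(z-n+1)}{n!}\, t^{z-n}$.

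The central calculation is (2). In the integral representation of $\binom{z+1}{n}_{\!\bfmu}$, decompose $\lambda^{z+1} = \lambda^z(\lambda - \mu_0 t) + \mu_0 t\cdot \lambda^z$. The first summand cancels the factor $(\lambda - \mu_0 t)^{-1}$, so the resulting integrand is $\lambda^z \prod_{i=1}^n (\lambda - \mu_i t)^{-1}$, whose integral evaluates to $\binom{z}{n-1}_{\!\check{\bfmu}}\, t^{z-n+1}$; the second summand keeps all factors intact and contributes $\mu_0 \binom{z}{n}_{\!\bfmu}\, t^{z-n+1}$. Dividing through by $t^{z+1-n}$ produces Pascal's identity.

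For (4), I would argue by induction on $n$. The substitution $\lambda \mapsto c\lambda$ in the integral yields the scaling identity $\binom{z}{n}_{\!c\bfmu} = c^{z-n}\binom{z}{n}_{\!\bfmu}$. Applied to $\check{\bfmu} = q\cdot(1,q,\ldots,q^{n-1})$, Pascal becomes a $q$-recurrence which the claimed Gaussian expression satisfies verbatim. Both sides are finite $\CC$-linear combinations of the exponential functions $q^{iz}$ for $i = 0, \ldots, n$, so their difference is also of this form; by the recurrence this difference is $1$-periodic in $z$, which forces each coefficient with $i \geq 1$ to vanish (since $q > 0$ and $q \neq 1$ imply $q^i \neq 1$), and the remaining constant mode vanishes by evaluation at $z = 0$ using (6). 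Thus the Gaussian formula is the unique solution, completing the induction. The main obstacle is precisely this uniqueness argument in (4); everything else reduces to elementary residue calculus.
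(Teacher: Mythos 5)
Your proof is correct, but it is organized quite differently from the paper's. The paper derives the explicit partial-fraction formula
\[
\binom{z}{n}_{\!\!\bfmu} = \sum_{i=0}^n \frac{\mu_i^z}{\prod_{j\neq i}(\mu_i - \mu_j)}
\]
(valid for distinct $\mu_i$) from the Lagrange interpolation definition of $\dmu$, and treats this as the workhorse: the Pascal identity (2) is obtained by manipulating this finite sum term-by-term and then extended to repeated weights by continuity in $\bfmu$, while (1), (5), (6) are read off the definition and (4) is relegated to a comparison with the known $q$-Pascal identity, with details deliberately skipped. You instead take the Cauchy integral representation from Proposition~\ref{prop:mu-Cauchy} as the starting point, which allows the Pascal identity to drop out of the one-line algebraic decomposition $\lambda^{z+1}=\lambda^z(\lambda-\mu_0 t)+\mu_0 t\,\lambda^z$ inside the integrand, avoiding both the partial-fraction formula and the ``deform to distinct weights and pass to the limit'' step. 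You also supply a complete proof of (4) where the paper gives none: the scaling identity $\binom{z}{n}_{\!c\bfmu}=c^{z-n}\binom{z}{n}_{\!\bfmu}$, Pascal's recurrence, and the linear independence of the exponentials $q^{iz}$ give a genuine uniqueness argument, anchored by the evaluation at $z=0$ via (6). This is more work than the paper invests but actually closes the gap the paper leaves open. The only small caveat is that your part~(1) appeals to the integral representation with a closed contour around the points $\mu_i t$ staying off the branch cut of $\lambda^z$; this is fine, but it is worth noting that the paper proves (1) even more directly from the definition of $\dmu$ as an interpolation operator, without any integrals. Overall the two routes deliver the same proposition, with your integral-based treatment being somewhat more uniform and the paper's explicit-formula approach being more elementary for readers who want the partial-fraction expression on hand anyway.
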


\begin{proof}
 Claim (1) follows from the holomorphicity of $z\mapsto t^z$.  For (2), we use Equation \eqref{eq:explicit_q-derivative} to obtain an explicit formula for the $\bfmu$-binomial coefficients when all $\mu_i$ are distinct:
 \begin{equation}
 \label{eq:mu-binomial_explicit}
  \binom{z}{n}_{\!\!\bfmu} = \sum_{i=0}^n \frac{\mu_i^z}{ \prod_{j\neq i} (\mu_i-\mu_j) }.
 \end{equation}
 In this case,
 \begin{align*}
  \binom{z+1}{n}_{\!\!\bfmu} & = \sum_{i=0}^n \frac{\mu_i^{z+1}}{ \prod_{j\neq i} (\mu_i-\mu_j) } \\
   &= \sum_{i=0}^n \frac{\mu_0 \mu_i^{z}}{ \prod_{j\neq i} (\mu_i-\mu_j) } + \sum_{i=0}^n \frac{\mu_i^{z}(\mu_i-\mu_0)}{ \prod_{j\neq i} (\mu_i-\mu_j) }  \\
   &= \mu_0 \sum_{i=0}^n \frac{ \mu_i^{z}}{ \prod_{j\neq i} (\mu_i-\mu_j) } + \sum_{i=1}^n \frac{\mu_i^z}{ \prod_{0\neq j\neq i} (\mu_i-\mu_j) }  \\
   &= \mu_0 \binom{z}{n}_{\!\!\bfmu} + \binom{z}{n-1}_{\!\!\check{\bfmu}}.
 \end{align*}
 The general case follows by continuity in $\bfmu$.  Claim (3) is standard since $\partial_{(1,\ldots,1)} = \frac{d^n}{dt^n}$.  Claim (4) follows by comparing Pascal's Identity for the $q$-binomial coefficients (see, \emph{e.g.}, \cite[\S2.1.2]{KliSch})---we skip the details since in any case we don't need this.  The final two claims are easy calculations.
\end{proof}

The following is an immediate consequence of the generalized Cauchy Differentiation Formula (Proposition \ref{prop:mu-Cauchy}).

\begin{corollary}
\label{cor:mu-Cauchy}
  Let $\bfmu = (\mu_0,\ldots,\mu_n)\in\RR_+^{n+1}$.  If $t\in\RR_+$, then for any $z$ with $\Re(z)<0$ and any vertical contour $\Gamma$ separating $t$ from $0$, we get
  \[
    \frac{1}{2\pi i} \int_\Gamma \lambda^z (\lambda - \mu_0 t)^{-1} \cdots (\lambda - \mu_n t)^{-1} \, d\lambda 
     = \binom{z}{n}_{\!\!\bfmu} t^{z-n}.
  \]
\end{corollary}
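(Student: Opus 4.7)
The plan is to realize this as a direct application of Proposition \ref{prop:mu-Cauchy} with $f(\lambda)=\lambda^z$. First I would fix the principal branch of $\lambda^z$, so that $f$ is holomorphic on the simply connected domain $U=\CC\setminus(-\infty,0]$. This domain contains both the positive reals (where $t$ and each $\mu_i t$ lie) and any vertical contour $\Gamma$ in the right half-plane that separates $t$ from $0$; in particular the hypotheses of Proposition \ref{prop:mu-Cauchy} on $f$ are in place.

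The only subtlety is that Proposition \ref{prop:mu-Cauchy} is formulated for a simple \emph{closed} contour enclosing each $\mu_i t$, whereas here $\Gamma$ is an unbounded vertical line. I would address this by closing $\Gamma$ to the right by a semicircular arc $A_R$ of large radius $R$, producing a closed contour $\Gamma_R$ that encloses all of the points $\mu_i t$ once $R$ is sufficiently large. On $A_R$ the integrand has magnitude $O(R^{\Re(z)-n-1})$, and the hypothesis $\Re(z)<0$ (together with $n\geq 0$) ensures that the contribution of $A_R$ vanishes as $R\to\infty$. Hence the integral along $\Gamma$ coincides with the limit of the closed-contour integrals along $\Gamma_R$.

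Proposition \ref{prop:mu-Cauchy} then evaluates each such closed-contour integral as $\frac{1}{n!}\dmu(t^z)$. It remains to identify this quantity with $\binom{z}{n}_{\!\!\bfmu}t^{z-n}$, which is precisely the content of Equation \eqref{eq:mu-binomial} (equivalently, one could substitute $f(s)=s^z$ directly into the explicit formula \eqref{eq:explicit_q-derivative} and read off the same constant from \eqref{eq:mu-binomial_explicit}). The main, and essentially only, obstacle is the contour-closing step above; this is the standard device that makes the hypothesis $\Re(z)<0$ both necessary and sufficient, and explains why the corollary is marketed as an immediate consequence of Proposition \ref{prop:mu-Cauchy}.
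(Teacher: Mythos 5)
Your argument is correct and takes the same route as the paper, which presents the Corollary as an ``immediate consequence'' of Proposition \ref{prop:mu-Cauchy}; you have simply made explicit the contour-closing estimate that turns the unbounded vertical line into a closed contour enclosing the poles $\mu_i t$ and shows why $\Re(z)<0$ is exactly the hypothesis needed. One small caution: Equation \eqref{eq:mu-binomial} as printed in the paper omits a factor of $\frac{1}{n!}$ (compare it with Equations \eqref{eq:explicit_q-derivative}, \eqref{eq:mu-binomial_explicit} and Proposition \ref{prop:mu-binomial}(3)), so the identification of the constant is more safely read off from the explicit formulas, as your parenthetical remark already does.
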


Lemma \ref{lem:mu-Cauchy-Delta} now follows from Corollary \ref{cor:mu-Cauchy} by the holomorphic functional calculus.

\bibliographystyle{alpha}
\bibliography{refs}

\def\polhk#1{\setbox0=\hbox{#1}{\ooalign{\hidewidth
  \lower1.5ex\hbox{`}\hidewidth\crcr\unhbox0}}}
\begin{thebibliography}{Uuy11}

\bibitem[CM95]{ConMos:local_index_formula}
A.~Connes and H.~Moscovici.
\newblock The local index formula in noncommutative geometry.
\newblock {\em Geom. Funct. Anal.}, 5(2):174--243, 1995.

\bibitem[CM08]{ConMos:twisted}
Alain Connes and Henri Moscovici.
\newblock Type {III} and spectral triples.
\newblock In {\em Traces in number theory, geometry and quantum fields},
  Aspects Math., E38, pages 57--71. Friedr. Vieweg, Wiesbaden, 2008.

\bibitem[Con94]{Connes:NCG}
Alain Connes.
\newblock {\em Noncommutative geometry}.
\newblock Academic Press, Inc., San Diego, CA, 1994.

\bibitem[DD10]{DanDab}
Francesco D'Andrea and Ludwik D{\polhk{a}}browski.
\newblock Dirac operators on quantum projective spaces.
\newblock {\em Comm. Math. Phys.}, 295(3):731--790, 2010.

\bibitem[DS03]{DabSit}
Ludwik D{\polhk{a}}browski and Andrzej Sitarz.
\newblock Dirac operator on the standard {P}odle\'s quantum sphere.
\newblock In {\em Noncommutative geometry and quantum groups ({W}arsaw, 2001)},
  volume~61 of {\em Banach Center Publ.}, pages 49--58. Polish Acad. Sci.,
  Warsaw, 2003.

\bibitem[Hig04]{Higson:local_index_formula}
Nigel Higson.
\newblock The local index formula in noncommutative geometry.
\newblock In {\em Contemporary developments in algebraic {$K$}-theory}, ICTP
  Lect. Notes, XV, pages 443--536. Abdus Salam Int. Cent. Theoret. Phys.,
  Trieste, 2004.

\bibitem[Kr{\"a}04]{Krahmer:Dirac}
Ulrich Kr{\"a}hmer.
\newblock Dirac operators on quantum flag manifolds.
\newblock {\em Lett. Math. Phys.}, 67(1):49--59, 2004.

\bibitem[KS97]{KliSch}
Anatoli Klimyk and Konrad Schm{\"u}dgen.
\newblock {\em Quantum groups and their representations}.
\newblock Texts and Monographs in Physics. Springer-Verlag, Berlin, 1997.

\bibitem[KW13]{KraWag}
Ulrich Kr\"ahmer and Elmar Wagner.
\newblock A residue formula for the fundamental {H}ochschild class on the
  {P}odle\'s sphere.
\newblock {\em J. K-Theory}, 12(2):257--271, 2013.

\bibitem[Mat15]{Matassa:CPq}
Marco Matassa.
\newblock On the {D}olbeault-{D}irac operators on quantum projective spaces.
\newblock Preprint. \url{https://arxiv.org/abs/1507.01823}, 2015.

\bibitem[NT05]{NesTus:local_index_formula}
Sergey Neshveyev and Lars Tuset.
\newblock A local index formula for the quantum sphere.
\newblock {\em Comm. Math. Phys.}, 254(2):323--341, 2005.

\bibitem[PW14]{PonWan:I}
Rapha\"el Ponge and Hang Wang.
\newblock {Noncommutative Geometry and Conformal Geometry. I. Local Index
  Formula and Conformal Invariants}.
\newblock Preprint. \url{https://arxiv.org/abs/1411.3701}, 2014.

\bibitem[Uuy11]{Uuye:PsiDOs}
Otgonbayar Uuye.
\newblock Pseudo-differential operators and regularity of spectral triples.
\newblock In {\em Perspectives on noncommutative geometry}, volume~61 of {\em
  Fields Inst. Commun.}, pages 153--163. Amer. Math. Soc., Providence, RI,
  2011.

\end{thebibliography}

\end{document}